\numberwithin{equation}{section}
\numberwithin{equation}{subsection}
\theoremstyle{plain}
\newtheorem{theorem}[equation]{Theorem}
\newtheorem{lemma}[equation]{Lemma}
\newtheorem{proposition}[equation]{Proposition}
\newtheorem{corollary}[equation]{Corollary}
\newtheorem{lem}[equation]{Lemma}
\theoremstyle{definition}
\newtheorem{example}[equation]{Example}
\newtheorem{remark}[equation]{Remark}
\newtheorem{definition}[equation]{Definition}
\DeclareMathOperator{\di}{div}
\DeclareMathOperator{\brr}{\bar{r}}
\DeclareMathOperator{\br}{\bar r}
\def\C{\mathbb C}
\def\Q{\mathbb Q}
\def\Z{\mathbb Z}
\def\im{{\rm im}}
\newcommand{\calv}{{\mathcal V}}
\newcommand{\calO}{{\mathcal O}}
\newcommand{\calS}{{\mathcal S}}
\newcommand{\calL}{\mathcal{L}}
\newcommand{\tX}{\widetilde{X}}
\newcommand{\reg}{{\rm reg}}
\newcommand{\cO}{{\mathcal O}}
\newcommand{\eca}{{\rm ECa}}
\newcommand{\pic}{{\rm Pic}}
\newcommand{\m}{\mathfrak{m}}
\newcommand{\bZ}{{\mathbb{Z}}}
\author{J\'anos Nagy}
\address{Alfr\'ed R\'enyi Institute of Mathematics,
Re\'altanoda utca 13-15, H-1053, Budapest, Hungary}
\email{nagy.janos@renyi.hu}
\author{Andr\'as N\'emethi}
\address{Alfr\'ed R\'enyi Institute of Mathematics,
Re\'altanoda utca 13-15, H-1053, Budapest, Hungary \newline
 \hspace*{4mm} ELTE - University of Budapest, Dept. of Geometry, Budapest, Hungary \newline \hspace*{4mm}
BCAM - Basque Center for Applied Math.,
Mazarredo, 14 E48009 Bilbao, Basque Country – Spain}
\email{nemethi.andras@renyi.mta.hu }
\author{Tomohiro Okuma}
\address{Yamagata University, Dept. of Math. Sciences, Yamagata, 990-8560, Japan}
\email{okuma@sci.kj.yamagata-u.ac.jp}
\title{Normal reduction number of normal surface singularities}
\begin{document}

\keywords{normal surface singularities, links of singularities,
plumbing graphs, rational homology spheres, geometric genus, line bundles, Picard group, reduction of an ideal,
normal reduction number}

\subjclass[2010]{Primary. 32S05, 32S25, 32S50, 57M27
Secondary. 14Bxx, 14J80, 57R57}
\thanks{The first two  authors are partially supported by NKFIH Grant ``\'Elvonal (Frontier)'' KKP 126683.}
\begin{abstract}
Let $(X,o)$ be a complex analytic normal surface singularity and let $\calO_{X,o}$ be its local ring.
We investigate the normal reduction number of $\calO_{X,o}$ and related numerical analytical invariants
via resolutions $\tX\to X$ of $(X,o)$ and cohomology groups  of different line bundles $\calL\in\pic(\tX)$.
The normal reduction number is the universal optimal bound from
which powers of certain ideals have stabilization properties.
Here we combine this with stability properties of the iterated Abel maps. Some of the main results
provide topological upper bounds for both stabilization properties.

 The present note was partially motivated by the open problems formulated in \cite{O19}.
Here we answer several of them.
\end{abstract}

\maketitle

\linespread{1.2}


\pagestyle{myheadings} \markboth{{\normalsize  J. Nagy}, {\normalsize  A. N\'emethi}, {\normalsize  T.Okuma}}
{{\normalsize Normal reduction number}}


\section{Introduction}\label{s:intr}

Let $(X,o)$ be a complex analytic normal surface singularity and let $\calO_{X,o}$ be its local ring.
Our goal is to investigate certain properties of the normal reduction number of $\calO_{X,o}$
via certain  numerical invariants associated with
 resolutions $\tX\to X$ of $(X,o)$ and  with the cohomology groups  of different line bundles $\calL\in\pic(\tX)$.

\subsection{Some ring--theoretical invariants of $\calO_{X,o}$} \label{ss:1.1} (See e.g. \cite{HS,O19}.)
 Let $I$ be an $\m$--primary ideal of $\cO_{X,o}$, where $\m$ is the maximal ideal.
The integral closure $\bar{I}$ of $I$ is the ideal consisting of all solutions of equations of type
$z ^n +c_1 z^{n-1} + \dots+ c_{n-1} z + c_n=0$ with coefficients $c_i \in I^i $.
Then $ I \subseteq \bar{I} \subseteq \sqrt{I}$.
We say that $I$ is \textit{integrally closed} if $I = \bar{I}$.
In the sequel   we will assume that $I=\bar{I}$.

Recall that an ideal $J\subset I$ is called a `reduction' of $I$ if $I$ is integral over $J$, or, equivalently,
$I^{r+1}=JI^r$ for a certain $r$. An ideal $Q\subset I$ is called `minimal reduction' of $I$ if $Q$ is minimal
among the reductions of $I$. In the case of $\m$--primary (integrally closed) ideals $I$ of $\calO_{X,o}$,
a minimal reduction is a parameter ideal.

For any  minimal reduction $Q$ of $I$, we have that ${\overline{I^{n+1}}} = Q {\overline{I^{n }}} $ for all large $n$.
We define the \textit{normal reduction number} $\br(I)$ of $I$ by $\br(I):= \min\{ r \;|\; \overline{I^{n+1}} = Q \overline{I^n}\;
\text{for all} \; n \ge r\ \mbox{and a minimal reduction $Q$}\}$;
this integer  does not depend on the choice of $Q$ (cf. \cite{Hu1987,OWY19a}).
The \textit{normal reduction number of} $(X,o)$ is defined by
$$\brr(X,o):=\max\{ \br(I) \;|\; I\subset \cO_{X,o}, \; \sqrt{I}=\m, \; I=\bar I\}.$$

Since the (normal)  filtration $\{\overline{I^n}\}$ contains several key information about the blow-up of $(X,o)$
(along $I$), we expect that the normal reduction numbers should encode  important information
about the ring $\cO_{X,o}$ and also about the resolution spaces of $(X,o)$. However,
the structure of the set of  reductions   is clarified only for very special cases
(see e.g.  \cite{OWY19a,OWY19b,O19}).

Some major questions are the following:

(i) For any fixed  $(X,o)$ find the possible values of $\brr(I)$ and $\brr(X,o)$.
(The expectation is that in general the answer depends essentially on the analytic type of $(X,o)$.)

(ii) Fix a topological type of a singularity. Show that the integers $\brr(I)$ and $\brr(X,o)$,
associated with  all the analytic structures supported on the fixed topological type,  can universally be bounded (from above)
by a topological invariant.

(iii) Find an optimal topological upper bound of (ii) (which is realized by a certain analytic structure).

\subsection{Some singularity--theoretical invariants of $(X,o)$}
Fix a resolution  $\phi:(\tX,E)\to (X,o)$
and an effective cycle $Z\geq E$ supported on the exceptional curve $E$, and $l'\in H^2(\tX,\Z)$ a Chern class
with $(-l',E_v)\leq  0$ for any irreducible component $E_v$ of $E$ (denoted as $-l'\in\calS'$, cf. \ref{ss:topol}).

If  $\calL\in \pic^{l'}(Z)$ is a line bundle on $Z$ with Chern class $l'$ and  without fixed components then
$h^1(Z,\calL^{n})\geq h^1(Z,\calL^{n+1})$ for $n\geq 0$ (cf. section \ref{s:STAB}).
Hence the sequence $n\mapsto h^1(Z,\calL^{ n})$
is non--increasing  and it is constant for $n\gg 0$.
For such an $\calL$ we define  $n_0(Z,\calL)$ as  the smallest
integer $n$ such that $h^1(Z,\calL^{n})= h^1(Z,\calL^{n+1})$.
In fact,   $n_0(Z,\calL)=
\min\{n\ge 0 \,:\, h^1(Z,\calL^n)=h^1(Z,\calL^{m}), \forall m>n\}$ as well, see e.g. \cite[Lemma 3.6]{O17}.

Let us consider next another  related stability problem as well.

For fixed $Z$ and $l'$ as above, one can consider $\eca^{l'}(Z)$, the space of
 effective Cartier divisors with
Chern class $l'\in L'$ and the Abel map
$c^{l'}(Z): \eca^{l'}(Z)\to \pic^{l'}(Z)$.
 Note that $\calL\in \im (c^{l'}(Z))$ if and only if $\calL$ has no fixed components,  cf. \ref{ss:4.1}.

Then  one shows that  the sequence  $n\mapsto \dim(\im(c^{nl'}(Z)))$ is a non--decreasing and it is
bounded from above by $h^1(\calO_Z)$, hence it must stabilise, cf. \ref{bek:I}.
 Let $n_0'(Z,l')$ be the smallest
integer $n$ such that  $\dim(\im(c^{nl'}(Z)))= \dim(\im(c^{(n+1)l'}(Z)))$.
Again,
$n_0'(Z,l')=
\min\{ n\ge 0\,:\,
\dim(\im(c^{nl'}(Z)))= \dim(\im(c^{ml'}(Z))), \forall m>n\}$, cf. Remark \ref{rem:stab}.

The integers $n_0(Z,\calL)$ and $n_0'(Z,l')$ are important invariants of the singularity.
If $Z\gg 0$ then they are independent of $Z$, and they will be denoted by $n_0(\calL)$ and $n_0'(l')$ respectively.

In Lemma \ref{lem:COMP} we show that for any $Z$ and any $-l'\in \calS'$ as above
 $$n_0'(Z,l')\leq \min\, \{\,  n_0(Z,\calL)\, :\, \calL\in \im (c^{l'}(Z)).\}$$

 In parallel to the questions formulated in \ref{ss:1.1} we can formulate the very same type of questions, but now for
 the integers $h^1(Z, \calL)$, $n_0(Z,\calL)$ and $n_0'(Z,l')$.
 (Note that one of the goals of the theory of Abel maps is also to understand the possible values of $h^1(Z, \calL)$
 and the corresponding stratification in ${\rm Pic}(Z)$ induced by $h^1(Z, \calL)$, cf. \cite{NNI,NNII,NNIII}.)

\subsection{The connection between the two approaches}
By a result of Lipman \cite{Li},
if $I$ is an integrally closed $\m$--ideal then  there exist a resolution $\phi: \tX\to X$ and an integral
 cycle $l\in\calS'$ on $\tX$ such that $I = (\phi_*\mathcal{O}_{\tX}(-l))_o=H^0(\tX,\mathcal{O}_{\tX}(-l))$  and $I\mathcal{O}_{\tX}=\mathcal{O}_{\tX}(-l)$ (that is, $\calO_{\tX}(-l)$ is generated by global sections).
Then we can define for every integer $n \ge 0$  a non-increasing  chain of integers
$q(nI):= h^1(\mathcal{O}_{\tX}(-nl))$, where $q(0I) := p_g(X,o)$ is the geometric genus of $(X,o)$. It turns out that  $q(nI)$ is independent of the representation
of $I$ as $\mathcal{O}_{\tX}(-l)$ (cf. \cite[\S 3]{OWY14}).
Furthermore,  we have $0 \le q(I) \le p_g(X,o)$ and
$\br(I) =\min\{n \in \Z_{> 0} \,|\, q((n-1)I)=q(nI)  \}$ (cf. \cite{OWY15a}).

This makes the bridge between the normal reduction number of ideals and the study of the cohomology of line bundles on resolution spaces. In particular,  $\br(I)-1=n_0(\calO_{\tX}(-l))$.

As we mentioned above, the normal reduction number $\brr(X,o)$ plays a key role in the algebraic study of the ring
$\mathcal{O}_{X,o}=(\phi_*\mathcal{O}_{\tX})_o$. Here are some
exemplifications of certain  key bounds:

\vspace{1mm}

$\bullet$  The natural homomorphism $\phi_*\cO_{\tX}(-nl) \otimes \phi_*\cO_{\tX}(-l) \to \phi_*\cO_{\tX}(-(n+1)l)$
is surjective for $n\ge \brr(X,o)$. That is, the graded algebra $\bigoplus _{n\ge 0}\phi_*\cO_{\tX}(-nl)$ is generated by parts of degree $\le \brr(X,o)$.

$\bullet$
The function $\varphi(n):=\dim_{\C}H^0(\cO_{\tX})/H^0(\cO_{\tX}(-nl))$  is a polynomial function of $n$ for $n\ge \brr(X,o)$; in fact, $\varphi(n)=\chi(\cO_{nl})+h^1(\cO_{\tX})-h^1(\cO_{\tX}(-nl))$ by Kato's Riemann-Roch Theorem.


$\bullet$ In \cite{OWY15b} is proved that  $ \brr(X,o)=1$ if and only if $(X,o)$ is rational. Furthermore,
the third author proved that  $\br(X,o)= 2$ for an elliptic singularity \cite{O17}.

\subsection{The new results} The present note was partially motivated by the open problems formulated in \cite{O19}.
Here we answer several of them.
In parallel we answer also some of the questions formulated in the previous subsections.
The main results are the following.

\vspace{1mm}

\noindent {\bf Theorem A.} {\it
 Fix a  complex analytic normal surface singularity $(X, o)$ such that all the irreducible exceptional curves (in some resolution)
 are rational.
 Fix also an arbitrary integer $0 \leq q \leq p_g(X,o)$.
Then there exists a resolution $\tX \to X$ and an effective integral cycle $l > 0$ such that $\calO_{\tX}(-l)$ is base point free and $h^1(\calO_{\tX}(-l)) = q$.}

This answers Conjecture 2.8 of \cite{O19}. For a slightly more general statement see Remark \ref{rem:GEN}.

\vspace{1mm}

\noindent {\bf Theorem B.} {\it
Assume that $(X,o)$ is  a normal surface singularity whose link is a rational homology sphere and let
us fix  a resolution $\tX \to X$ and an effective  cycle $Z\geq E$ and $-l'\in\calS'$.
Assume that $\calL\in\pic^{l'}(Z)$   is a base point free line bundle on $Z$. Then
$n_0(Z,\calL)\leq  1 - \min_{Z\geq l>0} \chi(l)$ (where $\chi$ is the Riemann--Roch expression of cycles supported on $E$, cf. \ref{ss:topol}).
In particular, $ \brr(X,o)\leq  2 - \min_{l>0} \chi(l)$.}

This answers Problem 3.13 of \cite{O19}.

Since $\min_{l>0} \chi(l)$ is a topological invariant independent of the choice of the resolution,
the above inequality provides a topological upper bound for $ \brr(X,o)$
(valid for any analytic structure  supported on a fixed topological type).

\vspace{1mm}

The analogue of Theorem B for the images of the Abel maps is the following.

\vspace{1mm}

\noindent {\bf Theorem C.} {\it Under the assumptions of Theorem B we also have
$n_0'(Z,l')\leq  1 - \min_{Z\geq l>0} \chi(l)$. }

\vspace{1mm}

Finally, we provide a criterion which guarantees that
$n_0(Z,\calL)=n_0'(Z,l')=1$ and   $\brr(X,o)=2$.

  In this way we are able to construct non--elliptic singularities $(X,o)$  with   $\brr(X,o)=2$;
  this answers negatively Problem 3.12 of \cite{O19} (which asked whether $\brr(X,o)=2$ characterizes  the elliptic germs).

This also shows that if we fix a positive integer $k\geq 2$,
the classification of those analytic structures which satisfy $\brr(X,o)=k$
can be a very hard task.

\subsection{} Most of the techniques and the guiding ideas of the proofs are based on the theory of
Abel maps developed by the first two authors in \cite{NNI,NNII,NNIII}.

\section{Preliminaries}\label{s:prel}

\subsection{The resolution}\label{ss:notation}
Let $(X,o)$ be the germ of a complex analytic normal surface singularity,
 and let us fix  a good resolution  $\phi:\widetilde{X}\to X$ of $(X,o)$.
We denote the exceptional curve $\phi^{-1}(0)$ by $E$, and let $\cup_{v\in\calv}E_v$ be
its irreducible components. Set also $E_I:=\sum_{v\in I}E_v$ for any subset $I\subset \calv$.
For a cycle $l=\sum n_vE_v$ we write  $|l|=\cup_{n_v\not=0}E_v$  for its support.
For more details see \cite{trieste,NCL,Nfive}.
\subsection{Topological invariants}\label{ss:topol}
Let $\Gamma$ be the dual resolution graph
associated with $\phi$;  it  is a connected graph.
Then $M:=\partial \widetilde{X}$ can be identified with the link of $(X,o)$, it is also
an oriented  plumbed 3--manifold associated with $\Gamma$.
We use the same
notation $\mathcal{V}$ for the set of vertices as well.
Recall that $M$ is a rational homology sphere, if and only if
 $\Gamma$ is a tree and all the curves $E_v$ are rational.

$L:=H_2(\widetilde{X},\mathbb{Z})$, endowed
with a negative definite intersection form  $I=(\,,\,)$, is a lattice. It is
freely generated by the classes of 2--spheres $\{E_v\}_{v\in\mathcal{V}}$.
 The dual lattice $L':=H^2(\widetilde{X},\mathbb{Z})$ is generated
by the (anti)dual classes $\{E^*_v\}_{v\in\mathcal{V}}$ defined
by $(E^{*}_{v},E_{w})=-\delta_{vw}$, the opposite of the Kronecker symbol.
The intersection form embeds $L$ into $L'$. Then ${\rm Tors}(H_1(M,\mathbb{Z}))\simeq L'/L$, abridged by $H$.
Usually one also identifies $L'$ with those rational cycles $l'\in L\otimes \Q$ for which
$(l',L)\in\Z$, or, $L'={\rm Hom}_\Z(L,\Z)$.


All the $E_v$--coordinates of any $E^*_u$ are strict positive.
We define the Lipman cone as $\calS':=\{l'\in L'\,:\, (l', E_v)\leq 0 \ \mbox{for all $v$}\}$.
As a monoid,
it is generated over $\bZ_{\geq 0}$ by $\{E^*_v\}_v$.


We set  $\chi(l')=-(l',l'-Z_K)/2$, where $Z_K\in L'$ is the (anti)canonical cycle
identified by adjunction formulae
$(-Z_K+E_v,E_v)=2g_v-2$ for all $v$, where $g_v$ is the genus of $E_v$.
By Riemann-Roch theorem $\chi(l)=\chi(\calO_l)$ for any $l\in L_{>0}$.
(Here $l>0$ means that $l=\sum_vn_v E_v$ with all $n_v\geq 0$ and $l\not=0$.)

\subsection{Analytic invariants}\label{ss:analinv}
{\bf The group ${\rm Pic}(\widetilde{X})$ } is the group $H^1(\tX, \cO_{\tX}^*)$
of  isomorphism classes of analytic line bundles on $\widetilde{X}$. It  appears in the exact sequence
\begin{equation}\label{eq:PIC}
0\to {\rm Pic}^0(\widetilde{X})\to {\rm Pic}(\widetilde{X})\stackrel{c_1}
{\longrightarrow} L'\to 0, \end{equation}
where  $c_1$ denotes the first Chern class. Here
$ {\rm Pic}^0(\widetilde{X})\simeq H^1(\widetilde{X},\calO_{\widetilde{X}})/H^1(E,\Z)$, where
$ H^1(\widetilde{X},\calO_{\widetilde{X}})\simeq
\C^{p_g}$,  $p_g=p_g(X,o)$ being  the {\it geometric genus} of
$(X,o)$. $(X,o)$ is called {\it rational} if $p_g=0$.
 Artin in \cite{Artin62,Artin66} characterized rationality topologically
via the graphs; such graphs are called `rational'. By this criterion, $\Gamma$
is rational if and only if $\chi(l)\geq 1$ for any effective non--zero cycle $l\in L_{>0}$.
(Recall also that the link of any rational singularity is a rational homology sphere.)



\bekezdes
Similarly, if $Z\in L_{>0}$ is an effective non--zero integral cycle such that $Z\geq E $, and $\calO_Z^*$ denotes
the sheaf of units of $\calO_Z$, then ${\rm Pic}(Z)=H^1(Z,\calO_Z^*)$ is  the group of isomorphism classes
of invertible sheaves on $Z$. It appears in the exact sequence
  \begin{equation}\label{eq:PICZ}
0\to {\rm Pic}^0(Z)\to {\rm Pic}(Z)\stackrel{c_1}
{\longrightarrow} L'\to 0, \end{equation}
where ${\rm Pic}^0(Z)\simeq H^1(Z,\calO_Z)/H^1(E,\Z)$.
If $Z_2\geq Z_1$ then there are natural restriction maps
 ${\rm Pic}(\widetilde{X})\to {\rm Pic}(Z_2)\to {\rm Pic}(Z_1)$.
 Similar restrictions are defined at  ${\rm Pic}^0$ level too.
These restrictions are homomorphisms of the exact sequences  (\ref{eq:PIC}) and (\ref{eq:PICZ}).


We also use the notations ${\rm Pic}^{l'}(\widetilde{X}):=c_1^{-1}(l')
\subset {\rm Pic}(\widetilde{X})$ and
${\rm Pic}^{l'}(Z):=c_1^{-1}(l')\subset{\rm Pic}(Z)$
respectively. If $H^1(E,\Z)=0$ (i.e., the link is a rational homology sphere) then
they are affine spaces associated with the vector spaces
${\rm Pic}^0(\widetilde{X})$ and ${\rm Pic}^0(Z)$ respectively.



As usual, we say that $\calL\in \pic(Z)$ has no fixed components if
\begin{equation}\label{eq:H_0}
H^0(Z,\calL)_{\reg}:=H^0(Z,\calL)\setminus \bigcup_v H^0(Z-E_v, \calL(-E_v))
\end{equation}
is non--empty.
\subsection{Cohomological cycles} \cite{MR}\label{ss:COHCYC}
Assume that $(X,o)$ is  a non--rational singularity and let
$\phi:\tX\to X$ be one of its resolutions.  Then, by definition,
the cohomological cycle $ Z_{coh}$  is the (unique) minimal cycle $Z\in L_{>0}$ with $h^1(\calO_Z)=p_g(X,o)$.
This is equivalent with $h^1(\calO_{Z_{coh}})=p_g(X,o)$ and
 $h^1(\calO_{Z'})<p_g(X,o)$ whenever $Z'\not\geq Z_{coh}$.
If $p_g(X,o)=0$ then,  by definition,  we set $Z_{coh}=0$.

\subsection{$q$--cohomological cycles \cite{OWY15a,OWY15b,O17}}\label{ss:qcohcyc}
Fix a resolution $\tX\to X$.

A cycle $C\in L_{>0}$ is called a {\em $q$-cohomological cycle} if
$$
 h^1(\cO_C)=q=\max_{D>0, \,|D|\le C}h^1(\cO_D),
$$
 and $h^1(\cO_{C'})<q$ for every nonzero effective cycle $C'<C$.
This basically says that $C$ is the cohomological cycle on its support $|C|$ and the geometric genus
on this support is $q$. Note that in general, $q$-cohomological cycle on a resolution is not unique.

\subsection{Laufer's Duality}\label{ss:LauferD}

Let us fix a good resolution $\tX\to X$  as above. Then there exists a perfect pairing
(cf. \cite{Laufer72,Laufer77,NNI})
\begin{equation}\label{eq:LD}
\langle\,,\,\rangle :H^1(\tX,\cO_{\tX})\otimes  \big(
H^0(\tX\setminus E,\Omega^2_{\tX})/ H^0(\tX,\Omega^2_{\tX})\big) \longrightarrow\ \C.\end{equation}
Here  $H^0(\tX\setminus E,\Omega^2_{\tX})$ can be replaced by
$H^0(\tX,\Omega^2_{\tX}(Z))$ for $Z\gg 0$ (e.g. for any $Z$ with $Z\geq \lfloor Z_K \rfloor$),
cf. \cite[7.1.3]{NNI},  and for such $Z\gg 0$ one also has  $H^1(Z,\cO_{Z})\simeq H^1(\tX,\cO_{\tX})$.

More generally, for any $Z>0$,
from the exact sequence $0\to \Omega^2_{\tX}\to \Omega^2_{\tX}(Z)\to \calO_Z(K_{\tX}+Z)\to 0$, vanishing
$H^1(\Omega^2_{\tX})=0$ and Serre duality $H^0(\calO_Z(K_{\tX}+Z))=H^1(\calO_Z)^*$, we obtain
$H^0(\tX,\Omega^2_{\tX}(Z))/ H^0(\tX,\Omega^2_{\tX})\simeq H^1(\calO_Z)^*$. In particular,
(see also  \cite[7.4]{NNI}) we have  a perfect pairing
\begin{equation}\label{eq:LD2}
\langle\,,\,\rangle:  H^1(Z,\cO_{Z})\otimes
\big(H^0(\tX,\Omega^2_{\tX}(Z))/ H^0(\tX,\Omega^2_{\tX})\big)\longrightarrow\ \C.\end{equation}
 Let $I\subset \calv$ be a  subset. Then (\ref{eq:LD2}) can also be applied for the cycle $Z|_{\calv\setminus I}$,
 the restriction of $Z$  to the components $\{E_v\}_{v\not\in I}$. 
Let $\Omega _{Z}(I)$ be the subspace of $H^0(\tX, \Omega^2_{\tX}(Z))/ H^0(\tX,\Omega_{\tX}^2)$ generated by differential forms which have no poles along $\cup_{v\in I}E_v\setminus \cup_{v\not\in I}E_v$.
Then (see also \cite[\S8]{NNI})
\begin{equation}\label{eq:ezlc}
h^1(\calO_{Z|_{\calv\setminus I}})=\dim \Omega_{Z}(I).
\end{equation}
By (\ref{eq:LD2}) a basis $[\omega_1],\ldots, [\omega_{h}]$ of $H^0(\Omega^2_{\tX}(Z))/H^0(\Omega^2_{\tX})$
provides $h=h^1(Z,\calO_Z)$ global  coordinates in $H^1(Z,\calO_Z)$.
In \ref{bek:transport}
(under the assumption
that the link is a rational homology sphere) we will describe the
above dualities in terms of  integrations,
cf.  \cite[\S 7]{NNI}.

\begin{lemma}\label{lem:blowupcoh} (Compare with \cite[2.6]{OWY15b}.)
Let $\phi$ be a resolution as above. Fix an irreducible exceptional divisor $E_v$ such that
the multiplicity of $Z_{coh}(\tX)$ along $E_v$ is positive.
Let $\pi:\tX_{new}\to \tX$ be the blow up of $\tX$ at a generic point of $E_v$, and let $E_{new}$ be the newly created
exceptional curve. Then $Z_{coh}(\tX_{new})=\pi^* Z_{coh}(\tX)-E_{new}$.
\end{lemma}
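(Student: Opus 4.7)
The plan is to verify that the candidate cycle $\hat{Z}_0 := \pi^* Z_0 - E_{new}$, where $Z_0 := Z_{coh}(\tX)$, satisfies the two defining conditions of the cohomological cycle on $\tX_{new}$: (a) $h^1(\calO_{\hat{Z}_0}) = p_g(X,o)$, and (b) $h^1(\calO_{Z'}) < p_g(X,o)$ for every effective cycle $Z' > 0$ on $\tX_{new}$ with $Z' \not\geq \hat{Z}_0$. Throughout, write $m_v>0$ for the $E_v$-multiplicity of $Z_0$. For part (a), the key numerical fact is $\hat{Z}_0 \cdot E_{new} = (\pi^* Z_0)\cdot E_{new} - E_{new}^2 = 0 - (-1) = 1$, so $\calO_{\tX_{new}}(-\hat{Z}_0)|_{E_{new}}\cong \calO_{\bP^1}(-1)$, whose $H^0$ and $H^1$ vanish. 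The short exact sequence
\[
0 \to \calO_{\tX_{new}}(-\hat{Z}_0)|_{E_{new}} \to \calO_{\pi^* Z_0} \to \calO_{\hat{Z}_0} \to 0
\]
then gives an isomorphism $H^1(\calO_{\hat{Z}_0})\cong H^1(\calO_{\pi^* Z_0})$, and the projection formula together with $R^i\pi_*\calO_{\tX_{new}} = 0$ for $i \geq 1$ identifies the right hand side with $H^1(\calO_{Z_0})$, of dimension $p_g$ by definition of $Z_0$.

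For part (b), I would set $Z'' := \pi_* Z'$ and write $Z' = \pi^* Z'' + k E_{new}$ with $k = n_{new} - n_v$ (the $E_{new}$-multiplicity of $Z'$ minus its $\tilde{E}_v$-multiplicity). Routine exact-sequence analysis -- using $h^1(\calO_{kE_{new}}) = 0$ when $k\geq 0$ in $0\to \calO_{kE_{new}}\to \calO_{Z'}\to \calO_{\pi^*Z''}\to 0$, and the sequence $0\to \calO_{|k|E_{new}}(-Z')\to \calO_{\pi^* Z''}\to \calO_{Z'}\to 0$ together with the surjection $H^1(\calO_{\pi^*Z''})\twoheadrightarrow H^1(\calO_{Z'})$ when $k<0$ -- yields $h^1(\calO_{Z'})\leq h^1(\calO_{Z''})$ in every case. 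Hence if $Z''\not\geq Z_0$, the defining property of $Z_{coh}(\tX)$ already forces $h^1(\calO_{Z'})<p_g$, as required.

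The hard case, in which both the genericity of $p$ and the hypothesis $m_v>0$ enter, is $Z''\geq Z_0$ yet $Z'\not\geq \hat{Z}_0$: the latter then forces $n_{new}\leq m_v - 2$, so $j := -k \geq 2$. For this I would invoke Laufer's duality \eqref{eq:LD2}. The projection formula provides the pullback identification $H^0(\Omega^2_{\tX_{new}}(\pi^*Z'' - E_{new}))=\pi^* H^0(\Omega^2_{\tX}(Z''))$; furthermore, since $h^1(\calO_{Z''})=h^1(\calO_{Z_0})=p_g$, the spaces $H^0(\Omega^2_{\tX}(Z''))$ and $H^0(\Omega^2_{\tX}(Z_0))$ both have dimension $h^0(\Omega^2_{\tX})+p_g$ and hence coincide. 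The main obstacle is a local coordinate computation in a chart $y=xu$ centred at $p$, showing that for a form $\omega_0\in H^0(\Omega^2_{\tX}(Z_0))$ with pole of exact order $l\leq m_v$ on $E_v$ whose leading term is nonvanishing at $p$, the pullback $\pi^*\omega_0$ has pole of exact order $l-1$ on $E_{new}$. The minimality of $Z_0$ ensures, level by level on the pole-order filtration (via Laufer duality applied to $Z_0 - sE_v$ for $0\leq s<m_v$), the existence of enough leading-term sections on $E_v$; each has finitely many zeros, so a generic $p\in E_v$ satisfies the required nonvanishing at all relevant levels simultaneously. This identifies $H^0(\Omega^2_{\tX_{new}}(\pi^*Z''-jE_{new}))$ with the subspace of $H^0(\Omega^2_{\tX}(Z_0))$ of forms with pole $\leq n_{new}+1\leq m_v-1$ on $E_v$, i.e.\ a subspace of $H^0(\Omega^2_{\tX}(Z_0 - E_v))$. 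Dualising and using $h^1(\calO_{Z_0 - E_v})<p_g$ finally yields $h^1(\calO_{Z'})\leq h^1(\calO_{Z_0-E_v})<p_g$, completing the argument.
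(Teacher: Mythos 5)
Your part~(a) and the easy half of part~(b) (the case $\pi_*Z'\not\geq Z_{coh}(\tX)$) are fine, but the hard case of~(b) contains a genuine gap, located in the claimed identification of $H^0(\Omega^2_{\tX_{new}}(\pi^*Z''-jE_{new}))$ with (the pullback of) the subspace of $H^0(\Omega^2_{\tX}(Z_0))$ of forms with pole order $\leq n_{new}+1$ along $E_v$. The inclusion you actually need is that every $\omega\in H^0(\Omega^2_{\tX}(Z_0))$ whose pullback has pole order $\leq n_{new}$ along $E_{new}$ already has pole order $\leq n_{new}+1$ along $E_v$. Locally, writing $\omega=g\,dx\wedge dy/x^{l}$ with $E_v=\{x=0\}$ and $x\nmid g$, the pole order of $\pi^*\omega$ along $E_{new}$ equals $l-1$ precisely when the leading coefficient $g|_{x=0}$ does not vanish at $p$, and drops to $l-2$ or less as soon as it does. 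Your genericity argument (``a generic $p$ satisfies the required nonvanishing at all relevant levels simultaneously'') does not exclude this: the leading coefficient depends on the form, the forms with pole order exactly $l$ constitute a coset of a positive\--dimensional subspace, and whenever the space of leading coefficients at some level has dimension $\geq 2$, \emph{every} point $p\in E_v$ is a zero of the leading coefficient of some nonzero form at that level. Hence the identification, and with it the bound $h^1(\calO_{Z'})\leq h^1(\calO_{Z_0-E_v})$, is not established by the argument as written.

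The conclusion you need is weaker and is exactly what the paper's (much shorter) proof supplies: to rule out $h^1(\calO_{Z'})=p_g$ one needs only a \emph{single} form whose pullback falls outside $H^0(\Omega^2_{\tX_{new}}(Z'))$. By \cite[2.6]{OWY15b} the sheaf $\Omega^2_{\tX}(Z_{coh})$ has no fixed components, so a generic $\omega\in H^0(\tX\setminus E,\Omega^2_{\tX})$ has pole divisor exactly $Z_0$; since $p$ is generic it avoids the finitely many zeros of the leading coefficient of this one fixed $\omega$ along $E_v$, so $\pi^*\omega$ has pole exactly $\pi^*Z_0-E_{new}$, i.e.\ pole order $m_v-1>n_{new}$ along $E_{new}$. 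If one had $h^1(\calO_{Z'})=p_g$, then by your own dimension count $H^0(\Omega^2_{\tX_{new}}(Z'))$ would be all of $\pi^*H^0(\Omega^2_{\tX}(Z_0))$, hence would contain $\pi^*\omega$ --- a contradiction. The paper runs this one-generic-form argument directly (pullback of a generic form is generic, and its pole is $\pi^* Z_{coh}-E_{new}$, which is therefore the new cohomological cycle); your direct verification of the two defining properties is a legitimate alternative scaffold, but the key step must be carried out with the single generic form rather than with all forms of a given pole level at once.
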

\begin{proof} Fix
 a generic differential form $\omega\in H^0(\tX\setminus E,\Omega^2_{\tX})$.
 By Laufer's  duality its pole is exactly $Z_{coh}(\tX)$.
(I.e.,   $\Omega^2_{\tX}(Z_{coh})$ has no fixed components, cf. \cite[2.6]{OWY15b}).)
 By a local computation, and using  the fact that we blow up a generic point of $E_v$,
 the pole of $\pi^*\omega $  is $\pi^*Z_{coh}(\tX)-E_{new}$. But $\omega$ being generic,
 $\pi^*\omega$ is a generic form at $\tX_{new}$ level, hence its pole is the
 cohomological cycle on $\tX_{new}$.
\end{proof}
\subsection{The Abel map} \label{ss:4.1}
In the proofs we will use several results from the theory of Abel maps associated with resolutions of
normal surface singularities, see \cite{NNI,NNII,NNIII,NNIV}. Next
 we recall some material from this theory needed later in the proofs.

In this subsection \ref{ss:4.1}
{\it we assume that the link is a rational homology sphere}.

Let us fix an effective integral cycle  $Z\in L$, $Z\geq E$.
Let $\eca(Z)$  be the space of effective Cartier  divisors supported on  $Z$.
Note that they have zero--dimensional supports in $E$.
Taking the class of a Cartier divisor provides  a map
$c:\eca(Z)\to \pic(Z)$, called the {\it Abel map}.
Let  $\eca^{l'}(Z)$ be the set of effective Cartier divisors with
Chern class $l'\in L'$, that is,
$\eca^{l'}(Z):=c^{-1}(\pic^{l'}(Z))$.
We consider the restriction of $c$, $c^{l'}(Z) :\eca^{l'}(Z)
\to \pic^{l'}(Z)$ too, sometimes still denoted by $c$.

The bundle  $\calL\in \pic^{l'}(Z)$ is in the image ${\rm im}(c)$ of the Abel map  if and only if
it has no fixed components, that is,  if and only if
$H^0(Z,\calL)_{\reg}\not=\emptyset$.
%

One verifies that $\eca^{l'}(Z)\not=\emptyset$ if and only if $-l'\in \calS'\setminus \{0\}$. Therefore, it is convenient to modify the definition of $\eca$ in the case $l'=0$: we (re)define $\eca^0(Z)=\{\emptyset\}$,
as the one--element set consisting of the `empty divisor'. We also take $c^0(Z)(\emptyset):=\calO_Z$. Then we have
\begin{equation}\label{eq:empty}
\eca^{l'}(Z)\not =\emptyset \ \ \Leftrightarrow \ \ l'\in -\calS'.
\end{equation}
If $l'\in -\calS'$  then
  $\eca^{l'}(Z)$ is a smooth complex irreducible quasi--projective
  variety of dimension
  \begin{equation}\label{eq:DIM}
  \dim \, \eca^{l'}(Z)=(l',Z)\end{equation}
   (see \cite[Th. 3.1.10]{NNI}). Moreover, cf.  \cite[Lemma 3.1.7]{NNI},
if $\calL\in \im (c^{l'}(Z))$
then  the fiber $c^{-1}(\calL)$
 is a smooth, irreducible quasi--projective variety of  dimension
 \begin{equation}\label{eq:dimfiber}
\dim(c^{-1}(\calL))= h^0(Z,\calL)-h^0(\calO_Z)=
 (l',Z)+h^1(Z,\calL)-h^1(\calO_Z).
 \end{equation}
 Using this,  one proves (see \cite[5.6]{NNI}) that for any $\calL\in \im (c^{l'})\subset \pic^{l'}(Z)$ one has
  \begin{equation}\label{eq:dimfiber2}
h^1(Z,\calL)\geq h^1(\calO_Z)-\dim(\im (c^{l'}(Z))),
 \end{equation}
 and equality holds whenever $\calL$ is generic in $\im(c^{l'}(Z))$.

Recall  that if all $E_v$--coefficients $(-E_v^*,Z)$ of $Z$ are very large (a fact denoted by $Z\gg 0$),
then  $Z$ constitute a  `finite  model' for $\tX$. (Note that `${\rm ECa}(\tX)$' is `undefined infinite dimensional'.)
Additionally, for $Z\gg 0$  one  also has $h^1(Z,\calL)=h^1(\tX,\calL)$ for $\calL\in {\rm Pic}(\tX)$
by Formal Function Theorem.

\bekezdes \label{bek:I}
Consider again  a Chern class $l'=\sum_{v\in\calv}a_vE^*_v\in-\calS'$ as above.
The $E^*$--support $I(l')\subset \calv$ of $l'$ is defined as $\{v\,:\, a_v\not=0\}$. Its role is the following.

Besides the Abel map $c^{l'}(Z)$ one can consider its `multiples' $\{c^{nl'}(Z)\}_{n\geq 1}$ as well.
 It turns out (cf. \cite[\S 6]{NNI}), that $n\mapsto \dim \im (c^{nl'}(Z))$
is a non-decreasing sequence, hence it becomes constant,
 and   $\im (c^{nl'}(Z))$ for $n\gg 1$  are  affine subspaces parallel to each other and
parallel with the affine closure of $\im(c^{l'}(Z))$, all of the same dimension. This common dimension  will be denoted by
 $e_Z(l')=\lim_{n\to \infty} \dim(\im (c^{nl'}(Z)))$.  It  depends only
on $I(l')$. Moreover, by \cite[Theorem 6.1.9]{NNI},
\begin{equation}\label{eq:ezl}
e_Z(l')=h^1(\calO_Z)-h^1(\calO_{Z|_{\calv\setminus I(l')}}),
\end{equation}
where $Z|_{\calv\setminus I(l')}$ is the restriction of the cycle $Z$ to its $\{E_v\}_{v\in \calv\setminus I(l')}$
coordinates.

Furthermore, for $\calL\in\pic(\tX)$  (cf. \cite[Th. 6.1.9(c)--(e)]{NNI})
\begin{equation}\label{eq:free}
\mbox{if $\calL|_Z\in {\rm im} (c^{nl'}(Z))$ ($n\gg0$, $Z\gg 0$) then }
\ \left\{ \begin{array}{l}
\mbox{$\calL$ is generated by global sections,  and} \\  h^1(\tX,\calL)
=h^1(\calO_{\tX|_{\calv\setminus I(l')}}). \end{array}\right.
\end{equation}


\bekezdes {\bf The Laufer integration.} \label{bek:transport}
Consider the following situation.
We fix a smooth point $p$ on $E$, a local bidisc $B\ni p$ with local coordinates $(x,y)$ such that $B\cap E=\{x=0\}$.
We assume that a certain form   $\omega\in H^0(\tX,\Omega^2_{\tX}(Z))$ has  local equation
$\omega=\sum_{ i\in\Z,j\geq 0}a_{i,j} x^iy^jdx\wedge dy$ in $B$.

In the same time,
we fix a divisor $\widetilde{D}$ on $\tX$, whose unique component $\widetilde{D}_1$
in $B$ has local equation $y$.
 Let $\widetilde{D}_t$ be another divisor, which
is the same as $\widetilde{D}$ in the complement of $B$ and its component $\widetilde{D}_{1,t}$
in $B$ has  local equation
$y+td(t,x,y)$.

Next,   we identify
$H^1(\tX, \calO_{\tX})$ with $\pic^0(\tX)$
by the exponential map and we consider the composition
$t\mapsto \widetilde{D}_t-\widetilde{D}
\mapsto \calO_{\tX}(\widetilde{D}_t-\widetilde{D})
\mapsto \exp^{-1} \calO_{\tX}(\widetilde{D}_t-\widetilde{D})
\mapsto \langle\exp^{-1} \cO_{\tX}(\widetilde{D}_t-\widetilde{D}),\omega\rangle$.
The next  formula makes this expression  explicit.
(Here  $B=\{|x|,\, |y|<\epsilon\}$ for a small $\epsilon$, and $|t|\ll \epsilon$.)
\begin{equation}\label{eq:Tomega11}
\langle\langle \widetilde{D}_t,\omega\rangle\rangle:= \langle\exp^{-1} \cO_{\tX}(\widetilde{D}_t-\widetilde{D}),\omega\rangle=
\int_{ \substack{|x|=\epsilon\\ |y|=\epsilon}} \log\Big(1+t\frac{d(t,x,y)}{y}\Big)\cdot
 \sum_{ i\in\Z,j\geq 0}a_{i,j} x^iy^jdx\wedge dy.\end{equation}
This restricted to any cycle $Z\gg0 $ can be reinterpreted as `$\omega$--coordinate'
of the Abel map
restricted to the path $t\mapsto D_t:=\widetilde{D}_t|_Z$
(and shifted by the image of $D:=\widetilde{D}|_Z$).
If $\omega$ has no pole along the divisor $\{x=0\}$ then
$\langle\langle \widetilde{D}_t,\omega\rangle\rangle=0$
for any path $\widetilde{D}_t$.
%

If more components of $\widetilde{D}$ are perturbed then
$\langle\langle \widetilde{D}_t,\omega\rangle\rangle$ is the  sum of such contributions.

\begin{definition}\label{not:residue}
Consider the above situation  and assume that
$\widetilde{D}_1$ has  local equation $y$.
 Then, by definition,   the {\it Leray residue }
of $\omega$ along $\widetilde{D}_1$ is  the 1--form  on $\widetilde{D}_1$
(with possible poles at $\widetilde{D}_1\cap E$) defined by
$(\omega/dy)|_{y=0}=\sum_{i\in \Z} a_{i,0}x^i dx$. We denote it by ${\rm Res}_{\widetilde{D}_1}(\omega)$.
\end{definition}
Note that if in (\ref{eq:Tomega11}) $\widetilde{D}_{1,t}=y+tx^{o-1}$ for some $o\geq 1$, then
$\frac{d}{dt}|_{t=0}\langle\langle \widetilde{D}_t,\omega\rangle\rangle =\lambda\cdot a_{-o,0}$ for a certain $\lambda\in\C^*$.
Therefore,
the right hand side of (\ref{eq:Tomega11}) tests exactly the non--regular part  of 
 ${\rm Res}_{\widetilde{D}_1}(\omega)$.

\bekezdes\label{ss:TA}  {\bf The sheaf $\Omega_{\tX}^2(Z)^{{\rm regRes}_{\widetilde{D}}}$.}
 Consider again $l'\in \calS'$ and a divisor
  $D \in \eca^{-l'}(Z) $, which is a union of $-(l', E)$ disjoint divisors  $\{D_i\}_i $,
 each of them $\cO_Z$--reduction of divisors $\{\widetilde{D}_i\}_i $ from  $\eca^{-l'}(\tX)$
 intersecting  $E$  transversally.
Set  $\widetilde{D}=\cup_i\widetilde{D}_i $   
 and write $Z=\sum_vm_vE_v$.

We introduce a subsheaf $\Omega_{\tX}^2(Z)^{{\rm regRes}_{\widetilde{D}}}$
of  $\Omega_{\tX}^2(Z)$ consisting of those forms $\omega$,
which have the property that for every  $i$ the residue ${\rm Res}_{\widetilde{D}_i}(\omega)$
has no pole at $\widetilde{D}_i\cap E$.
For more see \cite[10.1]{NNI}.

\begin{theorem}\label{th:Formsres} \ \cite[Th. 10.1.1]{NNI}
In  the above situation  one has the following facts.

(a) The sheaves $\Omega_{\tX}^2(Z)^{{\rm regRes}_{\widetilde{D}}}/\Omega_{\tX}^2$ and $\calO_{Z}(K_{\tX}+Z-D)$ are isomorphic.

(b) $H^0(\tX,\Omega_{\tX}^2(Z)^{{\rm regRes}_{\widetilde{D}}})/
H^0(\tX,\Omega_{\tX}^2)\simeq H^0(Z,\calO_Z(K_{\tX}+Z-D))\simeq H^1(Z,\calO_Z(D))^*$.

\end{theorem}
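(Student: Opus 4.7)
My plan is to prove (a) as a local sheaf--theoretic statement, then deduce (b) by taking global sections, invoking Grauert--Riemenschneider vanishing, and applying Serre duality on the Gorenstein curve $Z$.

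For (a), I would start from the standard exact sequence
\[
0 \to \Omega_{\tX}^2 \to \Omega_{\tX}^2(Z) \to \calO_Z(K_{\tX}+Z) \to 0
\]
and aim to identify $\Omega_{\tX}^2(Z)^{{\rm regRes}_{\widetilde{D}}}/\Omega_{\tX}^2$ with the subsheaf $\calO_Z(K_{\tX}+Z-D) \subset \calO_Z(K_{\tX}+Z)$ obtained by multiplication with the canonical section of $\calO_Z(D)$. The claim being local on $\tX$, there are only two cases to check. Away from $\widetilde{D}\cap E$ the residue condition is vacuous, $D$ is locally trivial, and there is nothing to prove. At a point $p=\widetilde{D}_i\cap E_v$ pick analytic coordinates $(x,y)$ with $E_v=\{x=0\}$ and $\widetilde{D}_i=\{y=0\}$, and set $m:=m_v$. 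A germ in $\Omega_{\tX}^2(Z)$ writes as $\omega = x^{-m}f(x,y)\,dx\wedge dy$ with $f$ holomorphic, and
\[
{\rm Res}_{\widetilde{D}_i}(\omega) \;=\; x^{-m}f(x,0)\,dx.
\]
Regularity at $p$ is equivalent to $f(x,0)\in (x^{m})$, i.e.\ $f\in (x^{m},y)$. Since $\Omega_{\tX}^2$ corresponds to $f\in (x^{m})$, locally
\[
\Omega_{\tX}^2(Z)^{{\rm regRes}_{\widetilde{D}}}/\Omega_{\tX}^2 \;\simeq\; (x^{m},y)/(x^{m}) \;=\; y\cdot\calO_Z,
\]
which is the ideal in $\calO_Z$ cut out by the local equation of $D$. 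Trivializing $K_{\tX}$ and $Z-D$ locally by $dx\wedge dy$ and $yx^{-m}$ respectively, this is precisely $\calO_Z(K_{\tX}+Z-D)$ on the nose, and the identifications glue. Transversality of $\widetilde{D}$ with $E$ guarantees that $y$ is a non--zero--divisor in $\calO_Z$, so $\calO_Z(-D)\hookrightarrow \calO_Z$ is an honest ideal inclusion and there are no consistency issues at the patching.

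For (b), take cohomology of the short exact sequence $0\to \Omega_{\tX}^2 \to \Omega_{\tX}^2(Z)^{{\rm regRes}_{\widetilde{D}}} \to \calO_Z(K_{\tX}+Z-D)\to 0$ from (a). The vanishing $H^{1}(\tX,\Omega_{\tX}^{2})=0$ (Grauert--Riemenschneider, equivalently $R^{1}\phi_{*}\omega_{\tX}=0$ combined with the Steinness of $X$) kills the connecting term and yields the first isomorphism
\[
H^{0}(\tX,\Omega_{\tX}^{2}(Z)^{{\rm regRes}_{\widetilde{D}}})/H^{0}(\tX,\Omega_{\tX}^{2})\;\simeq\; H^{0}(Z,\calO_Z(K_{\tX}+Z-D)).
\]
The second isomorphism is Serre/Grothendieck duality on $Z$: as an effective Cartier divisor in the smooth surface $\tX$, $Z$ is a Gorenstein $1$--dimensional scheme with dualizing sheaf $\omega_{Z}=\calO_Z(K_{\tX}+Z)$, so with $\calL=\calO_Z(D)$ one gets
\[
H^{1}(Z,\calO_Z(D))^{*}\;\simeq\; H^{0}(Z,\omega_{Z}\otimes \calO_Z(-D))\;=\;H^{0}(Z,\calO_Z(K_{\tX}+Z-D)).
\]

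The main obstacle I anticipate is in (a): one must check that the locally defined isomorphism $\Omega_{\tX}^2(Z)^{{\rm regRes}_{\widetilde{D}}}/\Omega_{\tX}^{2}\simeq \calO_{Z}(K_{\tX}+Z-D)$ is canonical and glues globally. The cleanest way is to realize the quotient as the image of $\Omega_{\tX}^2(Z)^{{\rm regRes}_{\widetilde{D}}}\hookrightarrow \Omega_{\tX}^2(Z)\twoheadrightarrow \calO_Z(K_{\tX}+Z)$ and to show that this image equals the subsheaf $\calO_{Z}(K_{\tX}+Z-D)\cdot s_{D}\subset \calO_Z(K_{\tX}+Z)$, where $s_{D}$ is the canonical (non--zero--divisor) section of $\calO_{\tX}(D)$; the local computation above verifies precisely this equality at each point of $\widetilde{D}\cap E$, while outside $\widetilde{D}\cap E$ both subsheaves coincide with $\calO_Z(K_{\tX}+Z)$.
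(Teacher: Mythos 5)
Your proof is correct, and it is essentially the argument behind the cited result: the paper itself does not prove Theorem \ref{th:Formsres} but quotes it from \cite[Th.\ 10.1.1]{NNI}, where the identification is obtained by exactly this local computation of the Leray residue at the transversal intersection points $\widetilde{D}_i\cap E$ (yielding the ideal $(x^{m},y)/(x^{m})=y\cdot\calO_Z$ inside $\calO_Z(K_{\tX}+Z)$), followed by the vanishing $H^1(\tX,\Omega^2_{\tX})=0$ and Serre duality on $Z$ --- the same ingredients the present paper records in \ref{ss:LauferD}. Your remark that the isomorphism in (a) should be realized canonically as the image of $\Omega_{\tX}^2(Z)^{{\rm regRes}_{\widetilde{D}}}$ under the projection to $\calO_Z(K_{\tX}+Z)$ correctly disposes of the only gluing subtlety.
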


\section{The possible $p_g$ and $h^1(\calL)$ values}\label{s:h^1values}

\subsection{`Subsingularities'}\label{ss:subsing}
In the  next discussions  it is convenient to use the following terminology. Let
$\phi: \tX \to X$ be a resolution with dual graph $\Gamma$.
Assume that $\Gamma_r$ is a full connected subgraph of $\Gamma$ with vertices $\calv_r$.
Let $\tX_r$ be a convenient small tubular
neighbourhood of  $E_r:= \cup_{v\in\calv_r}E_v$. By Grauert theorem \cite{GRa} $E_r$ can be contracted in
$\tX_r$, which give rise to a normal singularity $(X_r,o_r)$. We will call
$(X_r,o_r)$   a `subsingularity'  of $(X,o)$ with respect to $\phi$ and $\Gamma_r$. Or, just we say simply that
$\tX_r$ is a subsingularity of $\tX$ associated with $\Gamma_r$.

\subsection{}

 The next lemma will be useful in the next  proofs. For a slightly different version see
\cite{Nagy-holes}.

\begin{lemma}\label{lem:1} Consider $\phi:\tX\to X$  as above with cohomological cycle $Z_{coh}=\sum_v m_vE_v$.
Assume that  $m_u\geq 2$ for some $E_u\subset |Z_{coh}|$.
We blow up $E_u$ sequentially along generic points $m_u - 1$ times (this means that we blow up $E_u$ at a generic point,
say at  $p$, then we blow up the newly created exceptional curve at a generic point, etc.).
Let the last newly created  curve be  $E_{u'}$.
Let us denote the new modification  by $\pi:\tX_{new}\to \tX$, the new resolution by
$\tX_{new}\to X$, whose vertex set will be denoted by $\calv_{new}$. Let $\tX_{new}'$ denote a  convenient
small neighbourhood of the exceptional divisors indexed by $\calv_{new}\setminus \{u'\}$. Then $h^1(\calO_{\tX_{new}'})=h^1(\calO_{\tX_{new}})-1$.
This means that if $(X',o)$ denotes the singularity obtained from $\tX_{new}'$ by contracting its exceptional divisors, then  $p_g(X',o) = p_g(X,o) - 1$.

(Note that $u'$ is an end vertex, hence
the dual graph of $\tX_{new}'$ is connected.)
\end{lemma}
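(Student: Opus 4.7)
The plan is to iterate Lemma \ref{lem:blowupcoh} along the chain of blow-ups, track how the cohomological cycle transforms, and then read off the drop in $p_g$ from a single short exact sequence.

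First I would compute $Z_{coh}(\tX_{new})$ by induction. Writing $Z_{coh}(\tX)=\sum_v m_v E_v$, the first generic blow-up $\pi_1:\tX_1\to\tX$ of $E_u$ yields, by Lemma \ref{lem:blowupcoh}, $Z_{coh}(\tX_1)=\pi_1^* Z_{coh}(\tX)-E_{new,1}$, so $E_{new,1}$ carries multiplicity $m_u-1$ there. As long as this multiplicity is positive, Lemma \ref{lem:blowupcoh} applies again at a generic point of $E_{new,1}$, and an easy induction shows that after $k$ such blow-ups ($k\le m_u-1$) the newly created curve $E_{new,k}$ carries multiplicity $m_u-k$ in the current cohomological cycle, while the multiplicities on $E_u$ and on the strict transforms of the intermediate $E_{new,j}$ stay at $m_u$ and $m_u-j$ respectively. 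Iterating for $k=1,\dots,m_u-1$ produces $\tX_{new}$; the final curve $E_{u'}$ is a $(-1)$-end vertex with multiplicity $1$ in $Z^*:=Z_{coh}(\tX_{new})$, while its unique neighbour in the new chain carries multiplicity $2$.

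Next I would set $Z':=Z^*-E_{u'}$, which is an effective cycle supported on $\calv_{new}\setminus\{u'\}$, and feed the standard restriction sequence
\begin{equation*}
0 \longrightarrow \calO_{E_{u'}}(-Z') \longrightarrow \calO_{Z^*} \longrightarrow \calO_{Z'} \longrightarrow 0
\end{equation*}
into cohomology. The intersection numbers from the first step give $\deg_{E_{u'}}\calO_{E_{u'}}(-Z')=-(Z',E_{u'})=-(Z^*,E_{u'})+(E_{u'},E_{u'})=-1+(-1)=-2$, so on $E_{u'}\simeq\bP^1$ one has $h^0=0$ and $h^1=1$. Since $u'$ is an end vertex, deleting it does not change the number of connected components of the support, and the restriction $H^0(\calO_{Z^*})\to H^0(\calO_{Z'})$ is an isomorphism. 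The long exact sequence then collapses to
\begin{equation*}
0 \longrightarrow \C \longrightarrow H^1(\calO_{Z^*}) \longrightarrow H^1(\calO_{Z'}) \longrightarrow 0,
\end{equation*}
and gives $h^1(\calO_{Z'})=p_g(X,o)-1$.

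Finally I would translate this into the statement $p_g(X',o)=p_g(X,o)-1$. Since $Z'$ lives on $\tX_{new}'$, one has immediately $p_g(X',o)\ge h^1(\calO_{Z'})=p_g(X,o)-1$. For the reverse inequality I would invoke the defining property of the cohomological cycle recalled in \ref{ss:COHCYC}: every effective cycle $Z$ supported on $\calv_{new}\setminus\{u'\}$ satisfies $Z\not\ge Z^*$ (as $Z^*$ has positive multiplicity on $E_{u'}$), hence $h^1(\calO_Z)<p_g(X,o)$; letting $Z$ grow on $\tX_{new}'$ and invoking the Formal Function Theorem yields $p_g(X',o)\le p_g(X,o)-1$. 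The main obstacle is the inductive bookkeeping in the first step: at each stage one must check that the curve being blown up still carries positive multiplicity in the current cohomological cycle (so that Lemma \ref{lem:blowupcoh} remains applicable) and that the generic point chosen avoids the existing double points (so the resulting graph around $E_u$ is a chain with $E_{u'}$ a bona fide end vertex and the neighbour multiplicities behave as claimed). Once the multiplicity sequence $m_u,m_u-1,\dots,1$ along the new chain is secured, the rest of the proof is a short diagram chase.
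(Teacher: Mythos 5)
Your proof is correct and follows essentially the same route as the paper: iterated use of Lemma \ref{lem:blowupcoh} to see that $E_{u'}$ has multiplicity one in $Z_{coh}(\tX_{new})$, the minimality property of the cohomological cycle for the inequality $p_g(X',o)\le p_g(X,o)-1$, and the restriction sequence whose kernel is a degree $-2$ line bundle on $E_{u'}\simeq \bP^1$ for the reverse inequality. One small remark: your claim that $H^0(\calO_{Z^*})\to H^0(\calO_{Z'})$ is an isomorphism does not follow from connectivity alone (for non--reduced cycles these $h^0$'s can exceed $1$), but it is also not needed --- the long exact sequence already yields $h^1(\calO_{Z'})\ge h^1(\calO_{Z^*})-1$, which together with your upper bound closes the argument.
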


\begin{proof}
%
Let $Z_{coh}=Z_{coh}(\tX)$ (resp.  $Z_{coh}(\tX_{new})$)
denote the cohomological cycle of $\tX$ (resp. $\tX_{new}$).


First   we claim  that $p_g(X',o) < p_g(X,o)$.

Indeed, by Lemma \ref{lem:blowupcoh} the cohomological cycle of $\tX_{coh}$ is not supported in $\tX_{coh}'$, hence,
by the definition of the cohomological cycle $p_g(X',o)=h^1(\calO_{\tX_{new}'}) < h^1(\calO_{Z_{coh}(\tX_{new})})=p_g(X,o)$.
%

For the opposite inequality, let us compare in the sequence of blow ups the last step $\tX_{new}$ with the previous step
whose resolution space will be denoted by   $\tX_{new}^{pr}$. Let $b:\tX_{new}\to
\tX_{new}^{pr}$ be the blow up.
Let $Z^{pr}$ and $Z$ be the cohomology cycle of $\tX_{new}^{pr}$ and $\tX_{new}$
respectively. Then by Lemma \ref{lem:blowupcoh} $Z=b^*Z^{pr}-E_{u'}$ and the $E_{u'}$--multiplicity of $Z$ is one.
Then one sees that the cohomological cycle of $\tX_{new}'$ is smaller than $Z-E_{u'}$. Then using the surjection
 $\calO_Z\to \calO_{Z-E_{u'}}$ we get that $p_g(X,0)-p_g(X',o)\leq h^1(\calO_{E_{u'}}(2E_{u'}-b^*(Z^{pr}))=1$.
\end{proof}

\begin{remark}\label{rem:GENERALg}
In Lemma \ref{lem:1}
the assumption $m_u\geq 2$ cannot be replaced by $m_u\geq 1$,
see e.g. the cone--like case presented in Example  \ref{rem:3.3.3}.
\end{remark}

\subsection{}\label{conj1}
In the sequel we prove a {\bf conjecture} of the third author formulated in  \cite[Conjecture 2.8]{Okuma19} as follows:
{\it  if $(X, o)$ is a complex normal surface singularity and we fix  an arbitrary integer
$0 \leq q \leq p_g(X)$ then there exists a resolution $\tX \to X$ and an effective integral  cycle $Z > 0$ such that $\calO_{\tX}(-Z)$ is base point free and $h^1(\calO_{\tX}(-Z)) = q$.}

\begin{theorem}\label{t:q(I)}
 Fix a  complex normal surface singularity $(X, o)$ such that all the irreducible exceptional
 divisors are rational (however $\Gamma$ is not necessarily a tree).
 Fix also an  arbitrary integer $0 \leq q \leq p_g(X,o)$.
Then the following facts hold.

(1) There exists a resolution $\tX \to X$, and a  subsingularity $X_r$ with resolution
$\tX_r\subset \tX$ associated with a
 connected full subgraph $\Gamma_r \subset \Gamma$ (cf. \ref{ss:subsing}) such that
$p_g(X_r,o_r) = q$.

(2)  There exists a resolution $\tX \to X$,   which admits a   $q$--cohomological cycle.

(3) There exists a resolution $\tX \to X$ and an effective integral cycle $l > 0$ such that $\calO_{\tX}(-l)$ is base point free and $h^1(\calO_{\tX}(-l)) = q$.
\end{theorem}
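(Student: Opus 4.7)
The plan is to prove (1) first by induction on $p_g(X,o) - q$, and then deduce (2) and (3) from (1) using the cohomological cycle of the subsingularity $\tX_r$ together with the Abel map machinery from \S\ref{ss:4.1}.

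For (1), the base case $q = p_g(X,o)$ is trivial ($\Gamma_r = \Gamma$). For the inductive step it suffices to construct \emph{some} resolution and a subsingularity $(X',o)$ inside it with $p_g(X',o) = p_g(X,o) - 1$, and then apply the induction hypothesis to $(X',o)$ with the same value $q$. When the cohomological cycle $Z_{coh}$ on some resolution of $(X,o)$ has an $E_u$-coefficient $m_u \geq 2$, Lemma~\ref{lem:1} produces such an $(X',o)$ directly. The genuinely delicate case, and the main obstacle of the proof, is when $Z_{coh}$ is reduced, where Lemma~\ref{lem:1} does not apply (cf.\ Remark~\ref{rem:GENERALg}). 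My strategy in this case is to perform auxiliary blow-ups at carefully chosen points of $|Z_{coh}|$ (for instance at an intersection of two of its components, or via a sequence of generic blow-ups of a single end component) to force the appearance of a coefficient $\geq 2$ in the new cohomological cycle. The effect on $Z_{coh}$ of such non-generic blow-ups is tracked by the Laufer duality argument already used in the proof of Lemma~\ref{lem:blowupcoh}, comparing the pole divisor of a generic section of $\Omega^2_{\tX}(Z_{coh})$ before and after the blow-up; the rationality assumption on the exceptional curves is what keeps this bookkeeping tractable.

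Once (1) is in hand, (2) is essentially immediate: set $C := Z_{coh}(X_r)$ and view it as a cycle on $\tX$ supported on $\calv_r$. By the definition of the cohomological cycle one has $h^1(\calO_C) = p_g(X_r,o_r) = q$, every nonzero $C' < C$ satisfies $h^1(\calO_{C'}) < q$, and every effective $D$ with $|D| \subseteq \calv_r$ has $h^1(\calO_D) \leq p_g(X_r,o_r) = q$. Hence $C$ is a $q$-cohomological cycle on $\tX$ in the sense of \S\ref{ss:qcohcyc}.

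For (3), set $I := \calv \setminus \calv_r$ and consider $l' = -\sum_{v \in I} a_v E_v^* \in -\calS'$ with $I(l') = I$ and the coefficients $a_v$ large and divisible by the order of $L'/L$, so that $-l'$ is an integral effective cycle. By (\ref{eq:free}), any $\calL \in \pic^{l'}(\tX)$ whose restriction to a sufficiently large $Z$ lies in $\im(c^{l'}(Z))$ is generated by global sections and satisfies $h^1(\tX,\calL) = h^1(\calO_{\tX|_{\calv_r}}) = h^1(\calO_{\tX_r}) = q$. To realize such a bundle as $\calO_{\tX'}(-l)$ for a specific integral effective cycle $l > 0$ on some further blow-up $\tX' \to \tX$, I would pick a generic Cartier divisor $D \in \eca^{l'}(\tX)$ mapping to $\calL$ under the Abel map and blow up the finitely many points of $\supp(D)$; on the resulting $\tX'$ the total transform becomes an effective antinef integral cycle $l$, and $\calO_{\tX'}(-l)$ is then base point free (by Lipman~\cite{Li}) with $h^1 = q$.
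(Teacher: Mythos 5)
The main gap is in the reduced case of part (1), which you correctly identify as the crux but then attack with a strategy that cannot work. You propose auxiliary blow-ups ``to force the appearance of a coefficient $\geq 2$ in the new cohomological cycle''; however, if $Z_{coh}$ is reduced and all exceptional curves are rational, then $h^1(\calO_{Z_{coh}})$ equals the first Betti number of the dual graph of $|Z_{coh}|$, and this quantity is already computed by a \emph{reduced} cycle on any blown-up configuration. Concretely: blowing up a generic point of a multiplicity-one component gives, by Lemma~\ref{lem:blowupcoh}, a new cohomological cycle in which $E_{new}$ has coefficient $1-1=0$; blowing up an intersection point $E_v\cap E_w$ of two multiplicity-one components gives $\pi^*Z_{coh}$ with $E_{new}$-coefficient $2$, but the reduced cycle $\pi^*Z_{coh}-E_{new}$ has a dual graph with the same number of independent loops, hence the same $h^1$, so by minimality the new cohomological cycle is again reduced. (A cycle of rational curves, $p_g=1$, is the simplest configuration where your strategy provably fails after every blow-up.) The paper handles this case by a different, purely combinatorial step: for reduced $Z_{coh}$ one has $p_g=$ number of independent $1$--cycles of $\Gamma_r$, and one decreases this number by blowing up a point on an edge lying on a loop and then deleting the newly created exceptional curve from the subgraph; the resulting full subgraph is still connected and its $p_g$ drops by exactly one.

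Part (3) also has a gap. Formula (\ref{eq:free}) applies to a bundle $\calL$ with $\calL|_Z\in\im(c^{nl'}(Z))$ for $n\gg0$, but you need the specific \emph{natural} bundle $\calO_{\tX}(-l)$ (with $l=-l'$ integral) to have this property, which is not automatic; conversely, a generic $\calL\in\im(c^{l'}(Z))$ is not of the form $\calO_{\tX'}(-l)$ for any integral exceptional cycle on any blow-up, since it generally does not lie in the image of $L\to\pic(\tX')$. Your final step (``blow up the points of $\supp(D)$; the total transform becomes an effective antinef integral cycle $l$'') does not produce an exceptional cycle, and Lipman's theorem does not say that every antinef $l$ yields a base point free $\calO(-l)$ outside the rational case. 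The paper's Lemma~\ref{l:q-cycle} does the actual work here: it starts from a function $f$ with $\di_{\tX}(f)=l+D$ for an already base point free $\calO_{\tX}(-l)$, blows up $m_{u_i}$ infinitely near points at each intersection of $D$ with $|C|$ so as to separate the strict transform of $D$ from the support of the $q$--cohomological cycle, and only then applies \cite[3.6]{O17} and (\ref{eq:free}) to $\calO(-n\, l_{new})$ for $n\gg0$. Note also that your choice $I=\calv\setminus\calv_r$ degenerates to $l'=0$ when $q=p_g$. Part (2) of your proposal is correct and agrees with the paper.
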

\begin{proof}

We prove part {\it (1)} by a decreasing  induction on $q$.
 If $q = p_g(X,o)$, then the statement is trivial, since we can take any resolution $\tX \to X$, $\tX_r=\tX$ and
 $\Gamma_r=\Gamma$.

Next, assume that for a certain $0<q\leq p_g(X,o)$
we already know the validity of  the statement, that is,
we have a resolution $\tX \to X$, and a subresolution/subsingularity $\tX_r$  associated with the
subgraph $\Gamma_r \subset \Gamma$,
 such that $p_g(X_r,o) = q$.

Let us denote the cohomological cycle of $\tX_r$ by $Z_r=\sum_{v\in \calv(\Gamma_r)}m_vE_v$.

First, assume that
 there exists a vertex $u \in \calv(\Gamma_r)$ such that $m_u> 1$.
 Then we perform  the construction of  Lemma \ref{lem:1}:
 we blow up  $E_u$ sequentially along generic points $m_u- 1$ times and let the last created  curve be $E_{u'}$.
Let us  denote the new vertex set by $\calv_{new}$, and the new resolution by $\tX_{new}$.
Furthermore, we have its subsingularity $\tX_{r, new}$ with vertex set
$\calv_{r, new}$, where $\tX_{r, new}$ is the modification of $\tX_{r}$  by the previous sequence of blow ups.
Clearly, $h^1(\calO_{\tX_{r, new}}) = p_g(X_{r}) = q$.
Additionally, let us denote the subresolution  of $\tX_{r, new}$ corresponding to the vertex set $\calv_{r, new} \setminus \{u'\}$ by $\tX_{r, new}'$. Its dual graph is connected and
 by Lemma \ref{lem:1}
 $h^1(\calO_{\tX_{r,new}'}) =h^1(\calO_{\tX_{r,new}}) -1$,  which finishes the induction step and the proof of part {\it (1)}
 in this case.

Second, assume that $m_v\leq 1$ for all $v\in \calv(\Gamma_r)$. Then  $p_g(\tX_r) $ equals the number of independent 1--cycles in
$\Gamma_r$. Since  $p_g(\tX_r)>0$, we necessarily have at least one such cycle in $\Gamma_r$. Let us fix an edge $e$ of
$\Gamma_r$ such that the graph obtained from $\Gamma_r$ by deleting of $e$ has one less independent 1--cycles.
Then, if we blow up $e$ end we delete the newly created irreducible exceptional divisor with its adjacent edges we get a full
connected subgraph with the required property .

This ends the proof of part {\it (1)}.

For part {\it (2)},
 take a resolution $\tX$ and $\tX_r\subset \tX$ with $h^1(\tX_r)=q$ as in {\it (1)}.
Then the cohomological cycle of $\tX_r$ is  a $q$--cohomological  cycle of $\tX$.

Finally, part {\it (3)} follows  from {\it (2)} via
 the following  lemma (with slightly stronger statement).
\end{proof}
\begin{lem}\label{l:q-cycle}
If there exists a $q$-cohomological cycle on $\tX$, then there exists a resolution $\tX_{new}\to X$, which factors through $\tX$, and a cycle $l>0$ on $\tX_{new}$ such that $\cO_{\tX_{new}}(-l)$ is base point free and $h^1(\cO_{\tX_{new}}(-l))=q$.
\end{lem}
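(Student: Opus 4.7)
The plan is to combine the Abel map theory of Section 4.1 with Lipman's theorem: first produce on $\tX$ a globally generated line bundle with $h^1=q$ by the Abel map machinery, then pass to the associated $\m$-primary ideal and invoke Lipman's theorem to obtain a base point free ideal sheaf $\cO_{\tX_{new}}(-l)$ on a common modification $\tX_{new}\to\tX$.

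First I reduce to the case where $\calv_s:=\calv\setminus |C|$ is non-empty. Setting $\calv_r:=|C|$ and letting $\tX_r$ be the subresolution associated with $\calv_r$, the hypothesis that $C$ is a $q$-cohomological cycle implies $h^1(\cO_{\tX_r})=q$ (since $C$ is the cohomological cycle on its own support). If $|C|=\calv$, I apply the blow-up technique of Lemma \ref{lem:1} at generic points of some $E_v\subset |C|$; an analogue of Lemma \ref{lem:blowupcoh} for the $q$-cohomological cycle (the cycle drops by $E_{new}$, so its support eventually misses some new exceptional component) lets me iterate until the desired condition holds, and I replace $\tX$ by this modification.

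With $\calv_s\neq\emptyset$, set $l':=-\sum_{v\in\calv_s}E^*_v\in -\calS'$, so that $I(l')=\calv_s$. For $n\gg 0$ chosen so that $nl'\in L$, and any $Z\gg 0$, equation \eqref{eq:free} produces a line bundle $\mathcal{M}\in\pic^{nl'}(\tX)$ generated by global sections with $h^1(\tX,\mathcal{M})=h^1(\cO_{\tX|_{\calv\setminus I(l')}})=h^1(\cO_{\tX_r})=q$. Fixing a trivialization $\psi$ of $\mathcal{M}$ on $\tX\setminus E$, the image $I:=\psi(H^0(\tX,\mathcal{M}))\subset\cO_{X,o}$ is an $\m$-primary ideal (its generators generate the unit ideal off $o$ because $I$ is $\cO_{X,o}$-stable and $\mathcal{M}$ is trivial off $E$, but $I\neq\cO_{X,o}$ because $c_1(\mathcal{M})\neq 0$). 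By Lipman's theorem \cite{Li} there exists a resolution $\tX_{new}\to X$ and an effective integral cycle $l>0$ on $\tX_{new}$ with $I\cO_{\tX_{new}}=\cO_{\tX_{new}}(-l)$ base point free; by dominating $\tX$ and Lipman's resolution by a common modification, one may assume $\tX_{new}$ factors through $\tX$.

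The remaining and hardest step is to check $h^1(\cO_{\tX_{new}}(-l))=q$. Since $\cO_{\tX_{new}}(-l)$ is base point free on $\tX_{new}$, it has no fixed components and hence lies in the image of the Abel map $c^{-[l]}(\tX_{new})$; applying \eqref{eq:free} on $\tX_{new}$ gives $h^1(\cO_{\tX_{new}}(-l))=h^1(\cO_{\tX_{new}|_{\calv_{new}\setminus I(-[l])}})$. The strategy is to exploit the fact that $c_1(\mathcal{M})$ lies in the $\calv_s$-directions (so $\mathcal{M}|_{E_v}$ has degree $0$ for $v\in\calv_r$) to arrange that the Lipman blow-ups $\sigma:\tX_{new}\to\tX$ only occur over base points of $I\cO_{\tX}$ lying on the $\calv_s$-side, leaving the strict transforms of $\calv_r$-components untouched; this places $\calv_r$ inside $\calv_{new}\setminus I(-[l])$ and identifies the corresponding subresolution with $\tX_r$, yielding the result. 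Equivalently, if one can prove $\cO_{\tX_{new}}(-l)\cong\sigma^*\mathcal{M}$ directly, then the projection formula together with $R^i\sigma_*\cO_{\tX_{new}}=0$ for $i>0$ immediately gives $h^1(\tX_{new},\cO_{\tX_{new}}(-l))=h^1(\tX,\mathcal{M})=q$. I expect this identification (or the equivalent support analysis for $l$) to be the main technical point requiring care.
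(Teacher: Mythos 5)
There is a genuine gap, and it sits exactly where you flag it. The lemma asks for a bundle of the form $\cO_{\tX_{new}}(-l)$ with $l$ an \emph{integral cycle}, i.e.\ a bundle trivial off the exceptional set. The bundle $\mathcal{M}$ that (\ref{eq:free}) hands you is an essentially arbitrary (generic) element of $\im(c^{nl'}(\tX))\subset\pic^{nl'}(\tX)$; even when $nl'\in L$, such an $\mathcal{M}$ is trivial on $\tX\setminus E$ only if it coincides with the one distinguished ``natural'' line bundle in $\pic^{nl'}(\tX)$, and nothing guarantees that this particular point lies in $\im(c^{nl'})$ (the image is only a constructible subset of an affine subspace, of dimension $e_Z(I)$ which may be $<h^1(\cO_Z)$). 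Consequently $\psi(H^0(\tX,\mathcal{M}))$ is not a well-defined ideal of $\cO_{X,o}$ --- dividing by a section $s_0$ produces a fractional ideal with poles along $\phi_*(\di s_0)$, not a subideal of $\m$ --- so the Lipman step never gets started, and the hoped-for identification $\cO_{\tX_{new}}(-l)\cong\sigma^*\mathcal{M}$ cannot hold in general. Since this is precisely the step you defer (``the main technical point requiring care''), the argument is incomplete at its core. Two further problems: the whole package of \ref{ss:4.1}, including (\ref{eq:free}), is set up in the paper only under the assumption that the link is a rational homology sphere, whereas this lemma is invoked in Theorem \ref{t:q(I)} for graphs that need not be trees; and your preliminary reduction to $|C|\neq\calv$ quietly uses an unproved analogue of Lemma \ref{lem:blowupcoh} for $q$-cohomological cycles.

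The paper avoids all of this by never leaving the world of integral cycles. It fixes from the start a cycle $l$ with $(l,E_v)<0$ and $\cO_{\tX}(-l)$ base point free, takes a generic $f\in H^0(\cO_{\tX}(-l))$ with $\di_{\tX}(f)=l+D$, and blows up $m_{u_i}$ infinitely near points at each point $p_i$ where $D$ meets $|C|$ (the points being chosen generic with respect to $H^0(\cO_{\tX}(K_{\tX}+C))$). This forces the strict transform of $D$ to miss the support $S$ of the new cohomological cycle $C'=\pi^*(C)-\sum_{i,j}jF_{i,j}$, while $h^1(\cO_{C'})=h^1(\cO_C)=q$. Writing $\di_{\tX_{new}}(f\circ\pi)=l_{new}+\widetilde{D}_1+D_2$ gives an honest integral cycle $l_{new}$ with $\cO_{\tX_{new}}(-l_{new})$ fixed-component free, and the stabilization result \cite[Lemma 3.6]{O17} (which does not need the $\mathbb{Q}HS^3$ hypothesis) yields $h^1(\cO_{\tX_{new}}(-n\,l_{new}))=h^1(\cO_{C'})=q$ for $n\gg0$, with base point freeness of the power. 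If you want to salvage your approach, the missing ingredient is exactly this geometric device for realizing the target Chern class by a \emph{natural} line bundle after further blow-ups.
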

\begin{proof}
Let $C$ be a $q$-cohomological cycle of $\tX$, and write $C= \sum_{E_u\subset |C|}m_uE_u$.

Next, we  fix
 a cycle $l$ such that $(l,E_v)<0$ for all $E_v$ and $\cO_{\tX}(-l)$ is base point free.
 Let $f\in H^0(\cO_{\tX}(-l))$ be a general element and write $\di_{\tX}(f)=l+D$,
 where $D$ has no exceptional components.
 We write $D$ as  $D_1+D_2$ with disjoint union supports, such that all the components $\{D_{1,i}\}_i$
 of $D_1$ intersect
 $|C|$, while $D_2\cap |C|=\emptyset$. Since $\cO_{\tX}(-l)$ is base point free, we can chose $f$ such that
 the intersection points $D_1\cap |C|$ are generic with respect to the sections
 of $H^0(\tX, \calO_{\tX}(K_{\tX}+C))$. 
 Assume that  $p_i:=D_{1,i}\cap |C|\in E_{u_i}$. For each $i$,  we blow up  $\tX$ at $m_{u_i}$
 infinitely near point at $p_i$,
 let the created exceptional curves be $\{F_{i, j}\}_{1\leq j\leq m_{u_i}}$ ($F_{i,1}$ being the very first one).
 Denote the strict transforms of $E_v$
 by the same symbol $E_v$ ($v\in\calv$), the strict transform of $D_1$ by $\widetilde{D}_1$.
 Let $\tX_{new} $ be the new resolution, set $\pi:\tX_{new}\to \tX$.

  Then by the fact that the points $p_i$ are not base points of $\calO_{\tX}(K_{\tX}+C)$, we obtain that the support $S=\cup_{v}E_v\cup \cup_{i, 1\leq j< m_{u_i}}F_{i,j}$ has the following properties
   (cf. Lemma \ref{lem:blowupcoh} and its proof)

 (1) $S$ consists of those exceptional curves of $\tX_{new}$ which do not intersect $\widetilde{D}_1\cup D_2$.

 (2) The cohomological cycle $C'$ on  $S$ is $\pi^*(C)-\sum_{i,1\leq j<m_{u_i}}jF_{i,j}$, and its support is $S$.

 Finally, write $\di_{\tX_{new}}(f\circ \pi)$ as $l_{new}+\widetilde{D}_1+D_2$.
 Then $\calO_{\tX_{new}}(-l_{new})$ has no fixed components, hence {\it (1)} and {\it (2)} together with
  \cite[3.6]{O17} imply that for $n\gg 0$ the bundle  $\cO_{\tX_{new}}(-n\cdot l_{new})$ is base point free and
 $h^1(\cO_{\tX_{new}}(-n\cdot l_{new}))=h^1(\calO_{C'})$, cf. (\ref{eq:free}). But $h^1(\calO_{C'})=h^1(\calO_C)=q$.
\end{proof}

\begin{example}\label{rem:3.3.3} (a)  Consider
a cone--like singularity whose minimal resolution  is a smooth curve $C$ of genus $g\geq 2$ with self--intersection
very negative (compared with $g$).  Then, using the exact sequence
$0\to \calO_{C}(-mC)\to \calO_{(m+1)C}\to \calO_{mC}\to 0$ and the vanishing
$h^1(\calO_C(-mC))=0$ (for all $m\geq 1$)
we have $p_g(X,o)=h^1(C,\calO_C)=g$.
Moreover, for an  arbitrary resolution $\tX$, if we denote the strict transform of
$C$ by the same symbol $C$,  then $h^1(\tX,\calO_{\tX})=p_g(X,o)=h^1(C,\calO_C)=g$ too.
We claim that part {\it (1)} of Theorem \ref{t:q(I)} does not hold for any
$0<k<p_g(X,o)$. Indeed, take an arbitrary  resolution $\tX$ and $\tX_r\subset \tX$ as in {\it (1)}.
Then, if the strict transform of $C$ is in $\Gamma_r$ then $h^1(\calO_{\tX_r})=g$, otherwise it is zero.

(b) Let us start with a cone--like  singularity as in part (a).  Let $\tX$ be any resolution and fix
a line bundle $\calL\in \pic(\tX)$ without fixed components.
We claim that $h^1(\tX, \calL)=h^1(C,\calL|_C)$. Indeed, by the exact sequence
$0\to \calO_{\tX}(-C)\to \calO_{\tX}\to \calO_C\to 0$ (and part (a)) we get that
$h^1(\calO_{\tX}(-C))=0$. On the other hand, by multiplication with a generic global section of
$\calL$ we have an exact sequence $0\to \calO_{\tX}(-C)\to \calL(-C)\to A\to 0$, such that the support of
 $A$ is Stein.
Hence $H^1( \calO_{\tX}(-C))\to H^1( \calL(-C))$ is onto, hence $H^1(\tX,\calL(-C))=0$ too. This proves the claim.

Let us analyse the validity of part {\it (2)} of Theorem \ref{t:q(I)} in this case. Fix any resolution $\tX$  and
a base point free line bundle $\calL\in {\rm Pic}(\tX)$. If the degree $d$ of
the restriction $\calL|_C\in \pic(C)$  is zero then
$h^1(\tX,\calL)=g$. Otherwise  the degree is necessarily $d\geq 2$. Assume next this case.
%
%
%
%
Next we analyse the line bundle $\calL|_C\in \pic(C)$ of degree $d\geq 2$.

If $d>2g-2$ then $h^1(C,\calL|_C)=0$. If $d=2g-2$ then $h^1(C,\calL|_C)=h^0(C, \calL^{-1}|_C\otimes K_C)$ is 0 or 1.
If  $d<2g-2$ then by Clifford theorem (and Riemann--Roch) $h^1(C,\calL|_C)\leq g-d/2$ and if the equality holds then $C$ should be  hyperelliptic.
In particular,  if $C$ is not hyperelliptic, and $g\geq 3$ then
$h^1(\tX,\calL)=g-1$ cannot be realized.

If the degree $d$ is larger (than 2) then we get even a larger gap for $h^1(\tX,\calL)$.
\end{example}

\begin{remark}\label{rem:GEN} In Theorem \ref{t:q(I)} in fact we proved the following fact.
 Fix a  complex normal surface singularity $(X, o)$ and
  an  arbitrary integer $q$ such that $\sum_vg_v \leq q \leq p_g(X,o)$.
Then there exists a resolution $\tX \to X$ and a  subsingularity $X_r$ with resolution
$\tX_r\subset \tX$ associated with a
 connected full subgraph $\Gamma_r \subset \Gamma$ such that
$p_g(X_r,o_r) = q$.

Indeed, in this case the induction runs as follows.
If $Z_{coh}$ has a  coefficient with $m_u>1$ then we proceed as in the above proof and we can find a subsingularity with
geometric genus one less. If $Z_{coh}$ is reduced then $p_g=\sum_vg_v+c_{\Gamma}$, where
$c_{\Gamma}$ is the number of independent cycles in $\Gamma$. But the number of such cycles can  also
be decreased one by one (by deleting the exceptional curve of a blow up at a conveniently chosen  singular point of $E$).
\end{remark}

\section{Stability bound for  $n\mapsto h^1(\tX, \calL^{n})$.}\label{s:STAB}

\subsection{} Before we state the next result we wish to make the following preparation.

\bekezdes Let $\tX$ be a resolution and $Z\in L_{>0}$ an effective cycle  and
$\calL\in \pic^{l'}(Z)$ a line bundle without fixed components.
Fix a generic section $s\in H^0(Z,\calL)$. Then the cohomology long exact sequence
of $0\to\calL^{ n}\stackrel{\times s}{\longrightarrow} \calL^{ n+1}\to A\to 0$ (where $\times s$ is
the mulitplication by $s$ and the support of $A$ is zero-dimensional)  shows that
$h^1(Z,\calL^{n})\geq h^1(Z,\calL^{n+1})$. Hence the sequence $n\mapsto h^1(Z,\calL^{ n})$
is non--increasing  and it is constant for $n\gg 0$.

\begin{definition}
For any line bundle $\calL\in {\rm Pic}^{l'}(Z)$ without fixed components let $n_0(Z,\calL)$ be the smallest
integer $n$ such that $h^1(Z,\calL^{n})= h^1(Z,\calL^{n+1})$.
\end{definition}
\begin{remark}\label{rem:stab0}
Using the relevant exact sequences one  verifies that  $n_0(Z,\calL)=
\min\{n\ge 0 \,:\, h^1(Z,\calL^n)=h^1(Z,\calL^{m}), \forall m>n\}$, see e.g. \cite[Lemma 3.6]{O17}.
\end{remark}
\bekezdes Assume that the {\it link is a rational homology sphere}, hence the theory of Abel maps can be applied.
Similarly as above, if we fix a Chern class $l'\in-\calS'$, then $n\mapsto \dim(\im(c^{nl'}(Z)))$ is a non--decreasing sequence
bounded from above by $h^1(\calO_Z)$, hence it must stabilise, cf. \ref{bek:I}.
The limit  $e_Z(l')$  depends  only on the $E^*$--support $I=I(l')$ of $l'$, hence will also be denote by $e_Z(I)$.

\begin{definition}
For any Chern class  $l'\in - \calS'$  let $n_0'(Z,l')$ be the smallest
integer $n$ such that  $\dim(\im(c^{nl'}(Z)))= \dim(\im(c^{(n+1)l'}(Z)))$.
\end{definition}
\begin{remark}\label{rem:stab} A similar type of stability is valid for $n_0'$ as in Remark \ref{rem:stab0}. Namely,
$n_0'(Z,l')=
\min\{ n\ge 0\,:\,
\dim(\im(c^{nl'}(Z)))= \dim(\im(c^{ml'}(Z))), \forall m>n\}$. This follows from \cite[\S 6]{NNI}:
 $ \dim(\im(c^{nl'}(Z)))= \dim(\im(c^{(n+1)l'}(Z)))$ holds if and only if
 $\dim(\im(c^{nl'}(Z)))$ equals the dimension of the affine closure of $\im(c^{l'}(Z))$.
\end{remark}
\bekezdes
One has the following relations between the `stabilized dimensions'.
By \cite[Remark 3.8]{O17} or \cite[Th. 6.1.9]{NNI} we know that for $\calL$ without fixed components
$\lim_{n\to \infty} h^1(Z,\calL^{ n})=h^1(\calO_{Z|_{\calv\setminus I(l')}})$, and by (\ref{eq:ezl})
$e_Z(I)=h^1(\calO_Z)-h^1(\calO_{Z|_{\calv\setminus I(l')}})$.
In other words, whenever $\calL\in \im (c^{l'}(Z))$, we have
$$\lim_{n\to \infty} h^1(Z,\calL^{ n})=\lim_{n\to \infty} {\rm codim}(\im(c^{nl'}(Z))).$$
Furthermore, we have the following geometric interpretations in terms of differential forms as well.
By \cite[(8.3.1)]{NNI}, see also (\ref{eq:ezlc}),
$h^1(\calO_{Z|_{\calv\setminus I(l')}})=h^1(\calO_Z)-e_Z(I)$
 equals $\dim \Omega_Z(I)$, where $\Omega_{Z}(I)$ is
 that  subspace  $H^0(\tX, \Omega^2_{\tX}(Z|_{\calv\setminus I}))/
H^0(\tX, \Omega^2_{\tX})$ in  $H^0(\tX, \Omega^2_{\tX}(Z))/
H^0(\tX, \Omega^2_{\tX})$, which is
generated by forms which have no poles along (the generic points of) $\{E_u\}_{u\in I}$.
Furthermore, by Theorem \ref{th:Formsres}),
the dimension of a cohomology group $H^1(Z, \calL)$
can be determined via forms  whose  Leray residues have no pole along $D$, where
$D$ is a  transversal sections as in \ref{ss:TA}.

This is complemented with the following comparison.

\begin{lemma}\label{lem:COMP}  Assume that for certain integer $n_0$ there exists
 a line bundle $\calL\in \pic^{l'}(Z)$ without fixed components such that
$n\mapsto h^1(Z, \calL^{ n})$ is constant for any $n\geq n_0$.
Then
 $n\mapsto \dim(\im(c^{nl'}(Z)))$ is constant for $n\geq n_0$ as well. In other words, for any $l'\in-\calS'$,
 $$n_0'(Z,l')\leq \min\,\{n_0(Z,\calL)\, :\, \calL\in \im (c^{l'}(Z))\}.$$
\end{lemma}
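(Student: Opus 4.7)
The plan is to couple the two sequences $n\mapsto h^1(Z,\calL^n)$ and $n\mapsto \dim(\im(c^{nl'}(Z)))$ via the lower bound~(\ref{eq:dimfiber2}) and then read off the desired stabilization from their opposite monotonicities together with the two limit identifications recalled in the paragraph preceding the lemma. The first step is to note that $\calL^n\in \im(c^{nl'}(Z))$ for every $n\geq 1$: any $s\in H^0(Z,\calL)_{\reg}$ gives $s^n\in H^0(Z,\calL^n)_{\reg}$, so $\calL^n$ still has no fixed components. Applying~(\ref{eq:dimfiber2}) to $\calL^n$ yields
\begin{equation*}
h^1(Z,\calL^n)\ \geq\ h^1(\calO_Z)-\dim(\im(c^{nl'}(Z)))\qquad\text{for every } n\geq 1.
\end{equation*}

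Next I would invoke the two limiting values: by \cite[Remark 3.8]{O17} / \cite[Theorem 6.1.9]{NNI} combined with~(\ref{eq:ezl}), one has $\lim_{n\to\infty} h^1(Z,\calL^n)=h^1(\calO_{Z|_{\calv\setminus I(l')}})=h^1(\calO_Z)-e_Z(l')$, while $\lim_{n\to\infty}\dim(\im(c^{nl'}(Z)))=e_Z(l')$ by the very definition of $e_Z(l')$. Since $n\mapsto h^1(Z,\calL^n)$ is non-increasing and constant on $n\geq n_0$ by hypothesis, this constant value must coincide with its limit, namely $h^1(\calO_Z)-e_Z(l')$. Substituting back into the displayed inequality forces $\dim(\im(c^{nl'}(Z)))\geq e_Z(l')$ for all $n\geq n_0$; on the other hand, the sequence $n\mapsto \dim(\im(c^{nl'}(Z)))$ is non-decreasing with supremum $e_Z(l')$, so we must in fact have equality $\dim(\im(c^{nl'}(Z)))=e_Z(l')$ on $n\geq n_0$. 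In particular this sequence is constant there, which by Remark~\ref{rem:stab} is exactly the statement $n_0'(Z,l')\leq n_0=n_0(Z,\calL)$; taking the minimum over all admissible $\calL$ gives the displayed inequality.

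No step is genuinely difficult; the only point requiring care is the simultaneous bookkeeping of the two limits, and for both of them an explicit reference is already on record. Conceptually, inequality~(\ref{eq:dimfiber2}) ties $h^1(Z,\calL^n)$ to $\dim(\im(c^{nl'}(Z)))$ through the constant $h^1(\calO_Z)$, so their opposite monotonicities squeeze them against the common limit sum, and the stabilization of one transports to the other from the same threshold.
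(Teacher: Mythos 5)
Your proof is correct and follows essentially the same route as the paper's: both use the inequality (\ref{eq:dimfiber2}) applied to $\calL^n$, identify the constant value of $h^1(Z,\calL^n)$ for $n\geq n_0$ with its limit $h^1(\calO_Z)-e_Z(I)$, and then squeeze the non-decreasing sequence $n\mapsto\dim(\im(c^{nl'}(Z)))$ against its limit $e_Z(I)$. The only addition is your explicit check that $\calL^n$ has no fixed components, which the paper leaves implicit.
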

\begin{proof}
By assumption and the above discussion, for any $n\geq n_0$ one has
$h^1(\calO_Z)-e_Z(I)= h^1(Z, \calL^{ n})$. Moreover, by (\ref{eq:dimfiber2}),
$h^1(Z, \calL^{ n})\geq h^1(\calO_Z)-\dim(\im(c^{nl'}(Z)))$, hence
$\dim(\im(c^{nl'}(Z)))\geq  e_Z(I)$. But  $n\mapsto \dim(\im(c^{nl'}(Z)))$
is non--decreasing with limit $e_Z(I)$, hence $\dim(\im(c^{nl'}(Z)))= e_Z(I)$.
\end{proof}
\subsection{}
In this subsection we prove positively a {\bf question} of the third author formulated as
Problem 3.13 in  \cite{Okuma19} as follows:
{\it if $\tX \to X$ is an arbitrary resolution and $l_0\in L_{>0}$ is an effective integral
cycle such that the line bundle
$\calO_{\tX}(-l_0)$ is base point free, then $n\mapsto h^1(\calO_{\tX}(-n\cdot l_0))$ is constant  for
$n \geq 1 - \min_{l \geq E} \chi(l)$.}

Note that $\min_{l\geq E}\chi(l)=\min_{l>0}\chi(l)$.
In the literature the integer $\min_{l>0}\chi(l)$ was already considered in rather different situations.
 In \cite{Wa70}
Wagreich called $1-\min_{l>0}\chi(l)$ {\it arithmetical genus} $p_a$
of $(X,o)$,  and for any non--rational germ he proved that
$p_a\leq p_g$ (see \cite[p. 425]{Wa70}).
Furthermore, the bound
 $ 1 - \min_{l >0} \chi(l)$  has the following remarkable appearance  too under
 the assumption that the link is a rational homology sphere:
 it  is the smallest possible
 geometric genus of any singularity (analytic type) with the topological type fixed by the dual
 graph $\Gamma$ of $\tX$, see \cite{NNII}. Similarly, for any $Z\geq E$, the integer
$ 1 - \min_{Z\geq l>0} \chi(l)$ is the smallest possible value of
 $h^1(\calO_Z)$ associated with any analytic  singularity type
 with the same topological type fixed by the dual
 graph $\Gamma$ see \cite{NNII}. Both are realized by the generic analytic structure.

\subsection{} In fact,
we prove the statement for any base point free line bundle and any $Z\geq E$.


\begin{theorem}\label{th:STAB}
Assume that $(X,o)$ is  a normal surface singularity whose link is a rational homology sphere and let
us fix  a resolution $\tX \to X$ and an effective  cycle $Z\in L$, $Z\geq E$. 

Assume that $\calL\in\pic^{l'}(Z)$, $l'\in-\calS'$,  is a base point free line bundle on $Z$,
and write $I:=I(l')=\{v \in \calv \,:\,  (c_1\calL, E_v) \not=0\}$ as above.
Then $h^1(Z,  \calL^{n}) =  h^1(\calO_Z) - e_Z(I)$ whenever  $n \geq 1 - \min_{Z\geq l>0} \chi(l)$.
\end{theorem}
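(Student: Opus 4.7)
The strategy is to dualize via Laufer duality (Theorem \ref{th:Formsres}) and translate the stabilization of $h^1(Z,\calL^n)$ into a statement about 2-forms on $\tX$ whose Leray residues vanish along a transversal divisor of Chern class $n_\chi l'$, where $n_\chi:=1-\min_{Z\geq l>0}\chi(l)$.

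First, since $n\mapsto h^1(Z,\calL^n)$ is non-increasing and stabilizes to $h^1(\calO_{Z|_{\calv\setminus I}})=h^1(\calO_Z)-e_Z(I)$ (by (\ref{eq:ezl}) and the discussion in Section \ref{s:STAB} preceding the definition of $n_0(Z,\calL)$), it suffices to prove the single upper bound $h^1(Z,\calL^{n_\chi})\leq h^1(\calO_Z)-e_Z(I)$; equality, and hence stability for all larger $n$, is then automatic. To dualize, I would fix a reduced transversal divisor $\widetilde D$ on $\tX$ with Chern class $n_\chi l'$, lifting a generic element of $\eca^{n_\chi l'}(Z)$. Theorem \ref{th:Formsres} identifies
\[
H^1(Z,\calL^{n_\chi})^*\;\simeq\;H^0(\tX,\Omega^2_{\tX}(Z)^{\mathrm{regRes}_{\widetilde D}})\big/H^0(\tX,\Omega^2_{\tX}),
\]
while (\ref{eq:ezlc}) identifies $h^1(\calO_Z)-e_Z(I)$ with $\dim\Omega_Z(I)$, the subspace of classes of 2-forms with no pole along $\bigcup_{u\in I}E_u$. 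Since $\widetilde D$ meets $E$ only at $E_u$ with $u\in I$, the inclusion $\Omega_Z(I)\subseteq H^0(\Omega^2_{\tX}(Z)^{\mathrm{regRes}_{\widetilde D}})/H^0(\Omega^2_{\tX})$ is automatic, and the theorem reduces to proving the reverse inclusion at $n=n_\chi$.

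I would argue this by contradiction. Suppose $[\omega]$ is a class in the regRes quotient whose every representative has a pole along some $E_u$, $u\in I$, and let $l_\omega=\sum k_vE_v>0$ be its pole cycle. Then $\omega$ naturally represents a nonzero class in the smaller quotient $H^0(\tX,\Omega^2_{\tX}(l_\omega))/H^0(\Omega^2_{\tX})\cong H^1(\calO_{l_\omega})^*$, a space of dimension $h^1(\calO_{l_\omega})=1-\chi(l_\omega)$ (assuming $l_\omega$ has connected support; the general case being similar). The $\mathrm{regRes}$ condition at each point $p\in\widetilde D\cap E_u$ with $u\in I\cap|l_\omega|$ imposes $k_u$ linearly independent vanishing conditions on $[\omega]$, for a total of $n_\chi\cdot\sum_{u\in I\cap|l_\omega|}a_uk_u=n_\chi\cdot(l',l_\omega)$ conditions. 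Now $\chi(l_\omega)\geq\min_{Z\geq l>0}\chi(l)=1-n_\chi$ forces $n_\chi\geq 1-\chi(l_\omega)$, and $(l',l_\omega)\geq 1$ (as some $a_vk_v\geq 1$); together these give $n_\chi(l',l_\omega)\geq n_\chi\geq 1-\chi(l_\omega)=\dim H^1(\calO_{l_\omega})^*$. For generic $\widetilde D$ the imposed conditions are linearly independent on $H^1(\calO_{l_\omega})^*$ (a fact provable through the Abel map machinery of \cite{NNI,NNII,NNIII}), so they cut $[\omega]$ down to zero, contradicting $l_\omega>0$.

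The main obstacle is the genericity claim in the last sentence, together with the tight borderline case $n_\chi(l',l_\omega)=1-\chi(l_\omega)$ and non-connected or non-reduced pole cycles. One must justify that the regRes constraints imposed by a generic $\widetilde D$ have maximal rank on $H^1(\calO_{l_\omega})^*$; via Laufer's integration formula (\ref{eq:Tomega11}) this rank equals the dimension of the image of the Abel map $c^{n_\chi l'}(l_\omega)$, for which the required maximality follows from the dimension formulas (\ref{eq:DIM}), (\ref{eq:dimfiber}) and the general stabilization results of \cite{NNI}. A clean way to organize the whole argument is by induction along a Laufer-type computation sequence $0<l_1<l_2<\cdots=l_\omega$, using at each step the recursion $\chi(l_{j+1})-\chi(l_j)=1-(l_j,E_{v_j})$ to propagate both the dimension count for $H^1(\calO_{l_j})^*$ and the effective rank of the residue conditions.
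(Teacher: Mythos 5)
Your overall framework (dualize via Theorem \ref{th:Formsres} and show that a form with a pole along some $E_u$, $u\in I$, cannot have regular Leray residues along a generic transversal divisor of Chern class $n_\chi l'$) is the same as the paper's, and your initial reduction to the single inequality $h^1(Z,\calL^{n_\chi})\leq h^1(\calO_Z)-e_Z(I)$ is correct. But the counting that is supposed to produce the contradiction has a genuine gap, in fact two. First, the target space is wrong: you want the regRes conditions to cut $[\omega]$ down to zero in $H^1(\calO_{l_\omega})^*$, whose dimension you take to be $1-\chi(l_\omega)$. In general $h^1(\calO_{l_\omega})=h^0(\calO_{l_\omega})-\chi(l_\omega)\geq 1-\chi(l_\omega)$, with equality only when $h^0(\calO_{l_\omega})=1$; more importantly, the conditions can never be independent on all of $H^1(\calO_{l_\omega})^*$, because the subspace $\Omega_{l_\omega}(I)$ of classes with no pole along $\bigcup_{u\in I}E_u$ satisfies every regRes condition automatically (the divisor meets $E$ only along those $E_u$). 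So the maximal achievable rank is $e_{l_\omega}(I)=h^1(\calO_{l_\omega})-\dim\Omega_{l_\omega}(I)$, and ``cutting $[\omega]$ down to zero'' is not the right goal. Second, and more seriously, the maximal-rank claim for a generic $\widetilde D$ is essentially the statement being proved: the rank of the residue conditions at $D\in\eca^{n_\chi l'}$ is the rank of the tangent map of the Abel map at $D$, and the stabilization results of \cite{NNI} only guarantee that this reaches its limit for $n\gg 0$, with no effective bound. Invoking them at the specific level $n=n_\chi$ is circular.

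The paper's proof replaces your global count on $H^1(\calO_{l_\omega})^*$ by a count on a single curve, and this is where the hypothesis $n\geq 1-\min\chi$ actually does its work. After fixing the (generically stable) pole cycle $Z'$ and one vertex $u\in I$ with $Z'\geq E_u$, a local computation shows that the regular-residue condition at each of the $n$ independently generic points $p_i\in E_u\cap|D_i|$ forces the image of $\omega$ in $V:=\im\bigl(H^0(\Omega^2_{\tX}(Z'))\to H^0(E_u,\Omega^2_{\tX}(Z'))\bigr)$ to vanish at $p_i$. A linear system on a curve with a nonzero member through $n$ generic points has dimension at least $n+1$ --- an elementary independence statement needing no Abel-map input --- so $h^1(\calO_{Z'})-h^1(\calO_{Z'-E_u})=\dim V\geq n+1\geq 2-\min_{Z\geq l>0}\chi(l)$. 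This is contradicted by the purely cohomological estimate $h^1(\calO_{Z'})-h^1(\calO_{Z'-E_u})\leq 1-\chi(E_u+A)$, obtained from the fixed-component decomposition of $\calO_{Z'-E_u}(-E_u)$ and \cite[Prop.\ 5.7.1]{NNI}. To salvage your route you would have to prove that the residue conditions already reach rank $e_{l_\omega}(I)$ at $n=n_\chi$; that is Theorem \ref{th:STAB2}, which the paper proves by entirely different means (the relative dominance criterion of \cite{R}), and the logical dependence in the paper runs in the opposite direction via Lemma \ref{lem:COMP}.
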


\begin{proof} If $\calL\in \pic^{l'}(Z)$ is base point free, then
the divisor of  a generic section  of $\calL$ is the restriction of a divisor $\widetilde{D}$ of $\tX$,
 which intersects $E$ transversally (for $Z\gg 0$ see
e.g. \cite[Remark 9.1.3]{NNIV}, the general case follows similarly).
Note that $\widetilde{D}$ usually has many components.


Assume that for a certain $n\geq 2$ with
\begin{equation} \label{A}
 n\geq 1-\min_{Z\geq l>0} \chi(l) \end{equation}
one has $h^1(Z,  \calL^{n}) >  h^1(\calO_Z) - e_Z(I)$.
Recall that $h^1(\calO_Z)-e_Z(I)=\dim \Omega_Z(I)$,
 is the dimension of that subspace of
 $H^0(\tX, \Omega^2_{\tX}(Z))/
H^0(\tX, \Omega^2_{\tX})$, which is
generated by forms which have no poles along $\{E_u\}_{u\in I}$, cf. (\ref{eq:ezlc}) and (\ref{eq:ezl}).
Also, $h^1(Z,\calL^{ n})$ is the dimension of the subspace of classes of forms
whose  Leray residues
 along a set of fixed transversal sections of $\calL^{n}$ have no poles,  cf. Theorem \ref{th:Formsres}.

 Consider any set of generic
sections $s_1, s_2, \ldots, s_n \in H^0(Z, \calL)_{\reg}$ such that for any $i$ the divisor of $s_i$
is the restriction of a divisor $D_i$ of $\tX$, and each $D_i$ intersects $E$ transversally, and
 $D_i\cap D_j=\emptyset$ for $i\not= j$. Then, by the above numerical identifications, the inequality
 $h^1(Z,  \calL^{n}) >  h^1(\calO_Z) - e_Z(I)$ can happen only if
there exists a differential form $\omega \in H^0(\tX, \Omega^2_{\tX}(Z))/H^0(\tX, \Omega^2_{\tX})$
such that the Leray residues along each component of $\cup_i D_i$ has no pole,  but $\omega$
has a non--trivial  pole along a certain $E_u$  with $u \in I$ (cf. the previous paragraph).

Since the set of poles of relevant forms  runs over a finite set (hence for a generic choice of
the sections $\{s_i\}_i$ the pole of $\omega$ is stable)
 we obtain that there exists a fixed non--zero effective integral cycle
 $Z'\leq Z$ with $Z'\geq E_u$ for a certain $u\in I$,
 such that for any generic choice of $s_1,  \ldots, s_n \in H^0(Z, \calL)_\reg$
 (with transversal sections $D_i$ as above)
 we can find a differential form $\omega$ with pole  $Z'$ which  has regular  Leray residue along
 each component of  $\cup_iD_i$.

Let us fix such such a cycle $Z'$ and  $E_u$.
Write $m_u=-(E_u^*, Z)$ for the $E_u$--multiplicity of $Z$.

Since $(E_u,c_1\calL)\not=0$,  for any $i$  one of the components of $D_i$  intersects $E_u$.
Hence,  we can choose a point
$p_i\in E_u$ from the support $|D_i|$ of $D_i$.
Since $\calL$ is
base point free, this intersection point is even generic on $E_u$.

 Fix some local coordinates $(x,y)$ of the germ $(\tX,p_i)$  with
$\{x=0\}=E_u$, $\{y=0\}=D_{p_i} $,  the corresponding component $D_{p_i}$ of $D_i$.
Write $\omega$ in coordinates $(x,y)$,
 $\omega=\varphi(x,y)dx\wedge dy/x^o$ (modulo $x^{m_u}$), where
$\varphi(x,y)$ is locally holomorphic, and $o>0$ is the pole order along $E_u$, $x\nmid\varphi$.
Since the Leray residue $\varphi|_{y=0}dx/x^o$ has no pole along this $x$-axis, necessarily
$\varphi$ has the form $x^o\alpha(x)+y\beta(x,y)$, hence
 $\varphi(0,0)=0$. In  such a case we say that `$\omega$ vanishes at $p_i$'.

Since $\calL$ is base point free, these points $p_i$ can be chosen freely on $E_u$.
(Note that the point $p_i$ determines and is determined by $D_{p_i}$. Hence, generic $s_i$ provides a generic point
$p_i$. Furthermore, the choices of $s_i$ and $s_j$ for $i\not =j$ --- i.e. the choices of $p_i$ and $p_j$ ---
 are independent.)
Therefore,
we obtain that  for generic points $p_1, \ldots, p_n$ of $E_u$
 there exists a certain $\omega$ as above, which   vanishes at $p_1,\ldots, p_n$.
In other words,
for generic points $p_1, \ldots, p_n \in E_u$, there is a section
$s \in H^0(\tX, \Omega^2_{\tX}(Z'))\setminus
H^0(\tX, \Omega^2_{\tX}(Z'-E_u))$, which vanishes  at the points $p_1, \ldots, p_n$.

Consider next the exact sequence
\begin{equation}
0 \to H^0(\tX, \Omega^2_{\tX}(Z'-E_u))  \to   H^0(\tX, \Omega^2_{\tX}(Z')) \stackrel {\alpha}{\longrightarrow}
 H^0(E_u, \Omega^2_{\tX}(Z'))
\end{equation}
and let $V:=\im (\alpha)$.
It is the vector space
of a  linear system on $E_u$ such that for any generic $p_1, \ldots, p_n \in E_u$, there exists $s\in V$, $s\not=0$,
which vanishes at all the points $p_1,\ldots , p_n$. In particular,  $\dim(V)\geq  n+1$.
This combined with the assumption (\ref{A}) we get $\dim(V)\geq   2 - \min_{Z\geq l>0} \chi(l)$.

Hence,
we get that $\dim(\, H^0(\tX, \Omega^2_{\tX}(Z'))/
 H^0(\tX, \Omega^2_{\tX}(Z'-E_u))\,)\geq  2 - \min_{Z\geq l>0} \chi(l)$.

Since $\dim  H^0(\tX, \Omega^2_{\tX}(Z'))/H^0(\tX, \Omega^2_{\tX})=h^1(\calO_{Z'})$ (see e.g. \ref{ss:LauferD} or \cite[(7.1.40]{NNI}), and similarly
 $\dim  H^0(\tX, \Omega^2_{\tX}(Z'-E_u))/H^0(\tX, \Omega^2_{\tX})=h^1(\calO_{Z'-E_u})$
we obtain that
 \begin{equation}\label{eq:uj}
  h^1(\calO_{Z'}) -  h^1(\calO_{Z'- E_u}) \geq  2 - \min_{Z\geq l>0} \chi(l).\end{equation}
On the other hand,
from the following exact sequence
\begin{equation*}
0 \to H^0(\calO_{Z'- E_u}( - E_u)) \to  H^0(\calO_{Z'}) \stackrel{r}{\longrightarrow}
 H^0(\calO_{E_u})\to
H^1(\calO_{Z'- E_u}( - E_u)) \to H^1(\calO_{Z'}) \to  0
\end{equation*}
and from the fact that $r$ is onto, we get  that
 \begin{equation}\label{eq:uj2}
h^1(\calO_{Z'}) = h^1(\calO_{Z'- E_u}( - E_u)).
\end{equation}
Furthermore,
let us  denote the fixed component cycle of the line bundle
$\calO_{Z' - E_u}(-E_u)$ by $0 \leq A \leq Z' - E_u$. Then
$H^0(\calO_{Z' - E_u}(-E_u)) = H^0(\calO_{Z' - E_u- A}(-E_u- A))$ and $H^0(\calO_{Z' - E_u- A}(-E_u- A))_{\reg} \neq \emptyset$. Moreover, from the cohomological long exact sequence of
$$0\to \calO_{Z' - E_u-A}(-E_u-A)\to \calO_{Z' - E_u}(-E_u)\to \calO_{A}(-E_u)\to 0$$ we get
 \begin{equation}\label{eq:uj3}
  h^1(\calO_{Z'- E_u}( - E_u)) =  h^1(\calO_{Z'- E_u- A}( - E_u- A)) + 1 - \chi(E_u + A).
\end{equation}
Finally, if ${\mathcal G} \in \pic(\tilde{Z})$ is a line bundle without fixed components then
$h^1(\tilde{Z},{\mathcal G})\leq h^1(\calO_{\tilde{Z}})$
(see e.g. \cite[Proposition 5.7.1(b)]{NNI}),  hence
 $ h^1(\calO_{Z'- E_u- A}( - E_u- A)) \leq h^1(\calO_{Z'- E_u- A})$. But we also have
 $ h^1(\calO_{Z'- E_u- A}) \leq h^1(\calO_{Z'- E_u})$.
 This combined with (\ref{eq:uj2}) and (\ref{eq:uj3}) gives
   \begin{equation}\label{eq:uj4}
  h^1(\calO_{Z'}) \leq   h^1(\calO_{Z'- E_u}) + 1 - \chi(E_u + A),
\end{equation}
 which contradicts (\ref{eq:uj}).
\end{proof}

\section{Stability bound for  $n\mapsto \dim(\im(c^{nl'}(Z)))$.}\label{s:STAB2}

\subsection{} The next theorem establishes the analogue of Theorem \ref{th:STAB} for
 $n\mapsto \dim (\im (c^{nl'}))$.

\begin{theorem}\label{th:STAB2}
We fix an arbitrary singularity $(X,o)$, a resolution $\tX$, a Chern class $l' \in - S'$ with $E^*$--support $I=I(l')$, and a
 cycle $Z\in L$, $Z\geq E$.
Then the dimension of the image of the Abel map $n\mapsto
\dim(\im(c^{nl'}(Z)))$ is the  constant  $e_Z(I)$ whenever   $ n \geq 1 - \min_{Z\geq l>0} \chi(l)$.
\end{theorem}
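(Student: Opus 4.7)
The plan is to adapt the argument of Theorem~\ref{th:STAB} to the Abel--map setting, since the two statements share identical right--hand sides and a common geometric mechanism. One might first try to route through Lemma~\ref{lem:COMP}, which gives $n_0'(Z,l') \leq \min\{n_0(Z,\calL) : \calL \in \im(c^{l'}(Z))\}$, so that exhibiting a single base point free $\calL \in \pic^{l'}(Z)$ would let Theorem~\ref{th:STAB} conclude at once. However, for a general $l' \in -\calS'$ such an $\calL$ need not exist, so I would instead mimic the proof of Theorem~\ref{th:STAB} directly, replacing ``$n$ generic sections of a base point free $\calL$'' by ``a single generic Cartier divisor $D \in \eca^{nl'}(Z)$.''

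Arguing by contradiction, I would suppose $\dim \im(c^{nl'}(Z)) < e_Z(I)$ for some $n \geq 1-\min_{Z\geq l>0}\chi(l)$. Then for a generic $\calL = \calO_Z(D) \in \im(c^{nl'}(Z))$, the formulas~(\ref{eq:dimfiber2}) and (\ref{eq:ezl}) would force $h^1(Z,\calL) > \dim \Omega_Z(I)$, and Theorem~\ref{th:Formsres} would then produce a class $[\omega] \in H^0(\tX,\Omega^2_{\tX}(Z))/H^0(\tX,\Omega^2_{\tX})$ whose Leray residues along every component of $D$ are regular but whose pole cycle $Z'$ satisfies $E_u \leq Z' \leq Z$ for some $u \in I$. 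The key new input I would need is the dominance of the restriction--to--$E_u$ morphism $\eca^{nl'}(Z) \to E_u^{(na_u)}$, where $a_u = (l',E_u) > 0$; this should follow from the smoothness and irreducibility of $\eca^{nl'}(Z)$ guaranteed by \cite[Th.~3.1.10]{NNI}, combined with the freedom to locally perturb a Cartier divisor at a smooth point of its intersection with $E_u$.

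This dominance claim is the main obstacle; it takes over the role played by base point freeness in the proof of Theorem~\ref{th:STAB}. Once it is in hand, I would copy the remainder of that proof essentially verbatim. Namely, for generic $D$ the intersection $D \cap E_u$ consists of $na_u$ generic points, the local Leray--residue analysis of~\ref{bek:transport} forces $\omega$ to vanish at each of them, and the restriction $V := \im\bigl(H^0(\tX,\Omega^2_{\tX}(Z')) \to H^0(E_u,\Omega^2_{\tX}(Z'))\bigr)$ therefore contains a nonzero section vanishing at these $na_u \geq n$ generic points, giving $\dim V \geq na_u + 1 \geq 2 - \min_{Z\geq l>0}\chi(l)$. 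Combining the resulting inequality $h^1(\calO_{Z'}) - h^1(\calO_{Z'-E_u}) \geq 2 - \min_{Z\geq l>0}\chi(l)$ with $h^1(\calO_{Z'}) = h^1(\calO_{Z'-E_u}(-E_u))$, the fixed--component cycle trick for $\calO_{Z'-E_u}(-E_u)$, and the bound $\chi(E_u+A) \geq \min_{Z\geq l>0}\chi(l)$ (valid since $0 < E_u + A \leq Z$), would produce the desired contradiction $2 \leq 1$.
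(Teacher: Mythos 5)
Your proposal is correct, but it takes a genuinely different route from the paper's proof. The paper first reduces to the elementary Chern classes $-E_v^*$ using the additivity of images of Abel maps, and then invokes the relative--dominance criterion of \cite[Theorem 4.1.6]{R}: it reproduces the proof that the numerical inequality (\ref{eq:CRIT}) for all $0<l\leq Z$ forces $\dim(\im(c^{nl'}(Z)))=e_Z(I)$, and the remaining work is Riemann--Roch bookkeeping verifying (\ref{eq:CRIT}) for $n\geq 1-\min_{Z\geq l>0}\chi(l)$ (via the same fixed--component--cycle trick you use at the end). You instead transplant the Leray--residue mechanism of Theorem \ref{th:STAB}, replacing the $n$ generic sections of a fixed base point free $\calL$ by one generic divisor $D\in\eca^{nl'}(Z)$; the point you correctly isolate is that genericity of $D$ in the irreducible variety $\eca^{nl'}(Z)$, rather than base point freeness of a fixed bundle, is what allows the points of $D\cap E_u$ to be moved independently, and this dominance onto tuples of points of $E_u$ is indeed immediate since the reduced transversal divisors through arbitrary distinct smooth points form a dense open subset of $\eca^{nl'}(Z)$ (\cite[Th. 3.1.10]{NNI} and \ref{ss:TA}). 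From there the contradiction $2-\min_{Z\geq l>0}\chi(l)\leq h^1(\calO_{Z'})-h^1(\calO_{Z'-E_u})\leq 1-\chi(E_u+A)\leq 1-\min_{Z\geq l>0}\chi(l)$ is word for word the end of the proof of Theorem \ref{th:STAB}. Your route buys a unification of Theorems \ref{th:STAB} and \ref{th:STAB2} under a single mechanism and removes the dependence on \cite{R}; the paper's route yields the finer structural fact that $\im(c^{nl'}(Z))$ contains a Zariski open subset of the fiber $r_n^{-1}(\calO_{Z_-})$ (relative dominance) and keeps the numerical core independent of the differential--form calculus. Note that, like the paper's argument (which identifies $\pic^0(Z)$ with $H^1(\calO_Z)$), your argument uses the rational homology sphere hypothesis through Theorem \ref{th:Formsres}; this is consistent with Theorem C of the introduction even though the statement of Theorem \ref{th:STAB2} says ``arbitrary singularity''.
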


Note that if there exists a base point free line bundle $\calL\in \pic^{l'}(Z)$ then Theorem \ref{th:STAB2} follows from
Lemma \ref{lem:COMP} and Theorem \ref{th:STAB}. The proof of the general case is based on different arguments.

Regarding the restriction $Z\geq E$ note that
the above theorem remains true for any $Z>0$ as well. Indeed,  in this general case
we apply the above version restricted to the connected subgraphs of the support $|Z|$ of $Z$.

\begin{proof}
If we fix some $\calL_0 \in \pic^{l'}(Z)$, then each $\pic^{nl'}(Z)$ can be identified with the linear space
$\pic^0(Z)=H^1(\calO_Z)$ via $\calL\mapsto \calL\otimes \calL_0^{-n}$, and the Abel map $c^{nl'}(Z):
\eca^{nl'}(Z)\to \pic^{nl'}(Z)$ with $\widetilde{c}^{nl'}=\widetilde {c}^{nl'}(Z):\eca^{nl'}(Z)\to H^1(\calO_Z)$. (In \cite{NNI} $\calL_0$ is the `natural line bundle' associated with $l'$.)

In this way we can  use the vector space structure of $H^1(\calO_Z)$.
In particular,
\begin{equation}\label{eq:SUM}
\im( \widetilde{c}^{nl'})+\im( \widetilde{c}^{ml'})\subset \im( \widetilde{c}^{nl'+ml'})
\subset \overline { \im( \widetilde{c}^{nl'})+\im( \widetilde{c}^{ml'}) },\end{equation}
where $\overline {\cdot}$ denotes the topological closure.
Clearly, $\im (c^{nl'}) $ and $\im (\widetilde{c}^{nl'})$
can also be identified as subspaces.
Let us denote by $A(\im (c^{nl'}))$ the affine closure of $\im(c^{nl'})$.
Note that  up to an affine translation $A(\im (c^{l'}))$
is the same as the affine closure $A(\im (c^{nl'}))$ for any $n$.
Therefore,  the `stabilized limit'
$\{ \im (c^{nl'})\}_{n\to \infty}$ is the affine closure $A(\im (c^{l'}))$ (up to an affine translation).
Hence, the dimension $n\mapsto \dim(\im ( c^{nl'}))$ stabilises
exactly when $\overline{\im(c^{nl'}(Z))} = A(\im(c^{nl'}(Z)))$. (For more see \cite[6.1]{NNI}.)
Furthermore,
using again the analogue of (\ref{eq:SUM}) for arbitrary two Chern classes,
 if $l' = \sum_{v \in \calv} - a_v E_v^*$, then
 $\overline{\im(c^{nl'}(Z))} = \overline{\sum_{v \in \calv} \im(c^{-n a_vE_v^*}(Z))}$  and
$ A(\im(c^{nl'}(Z)) = \sum _{v \in \calv} A(\im(c^{-n a_v E_v^*}(Z))$.

This means that it is  enough to prove, independently for  each   $v \in \calv $,
that $A(\im(c^{-n E_v^*}(Z)) = \overline{ \im(c^{-n E_v^*}(Z)}$
 for any  $n \geq 1 - \min_{Z\geq l>0} \chi(l)$.

Fix some $v\in \calv$ and let $\{\Gamma_i\}_i$ be the connected full subgraphs of $\Gamma\setminus v$. For any
cycle $W\in L(\Gamma)$ of the form
 $W=tE_v+\sum_iW_i$ with $W_i\in L(\Gamma_i)$, we  set $W_-:= \sum _i W_i$, the
restriction of $W$ to $\cup_i\Gamma_i$. Write also $l'=-E^*_v$ and $I=I(l')=\{v\}$.

 Consider the restriction  $r_n:\pic^{nl'}(Z)\to \pic^{0}(Z_-)$, $\calL\mapsto \calL|_{Z_-}$.
 This is the affine projection associated with the vector space projection $H^1(\calO_Z)\to H^1(\calO_{Z_-})$.
Hence the fiber $r_n^{-1}(\calO_{Z_-})$  has dimension $h^1(\calO_{Z})-h^1(\calO_{Z_-})= e_Z(I)$ (cf. (\ref{eq:ezl})).
Note also that $\im (c^{nl'}(Z)) \subset r_n^{-1}(\calO_{Z})$. In particular,
$\dim (\im ( c^{nl'}(Z)))=e_Z(I)$ if and only if $\im ( c^{nl'}(Z)) $ contains  a Zariski open subset
in $r_n^{-1}(\calO_{Z_-})$.
This in \cite{R} is formulated as `the pair $(nl', \calO_{Z_-})$ is relative dominant'. In \cite[Theorem 4.1.6]{R}
the following criterion  is proved  for relative dominance. This is what we will use  in the proof.

For the convenience of the reader we reproduce that part of  \cite[Theorem 4.1.6]{R} what  is needed.

\vspace{1mm}

\noindent {\bf Claim:} {\it If  for certain fixed $n\geq 1$
\begin{equation}\label{eq:CRIT}
\chi(nE_v^*) - h^1(\calO_{Z_-}) < \chi(nE_v^* + l) - h^1(\calO_{(Z-l)_-}(-l)),
\end{equation}
for every cycle $0 < l \leq Z$, then $\dim (\im ( c^{nl'}(Z)))=e_Z(I)=h^1(\calO_{Z})-h^1(\calO_{Z_-})$.}

 \noindent {\it Proof of the Claim.}

 Choose $\calL$ a generic element of $\pic^{nl'}(Z)$ with $\calL|_{Z_-}=\calO_{Z_-}$
 (that is,  generic in $r_n^{-1}(\calO_{Z_-})$).
 By a Chern class computation  (recall that $l'=-E^*_v$)   we obtain that
 (\ref{eq:CRIT}) is equivalent with
 \begin{equation}\label{eq:CRIT2}
 h^1(\calO_{(Z-l)_-}(-l))+ \chi (Z-l, \calL(-l)) <
 h^1(\calO_{Z_-}) +\chi(Z,\calL).
\end{equation}
 Take first $l=Z$. Then (\ref{eq:CRIT2}) implies $\chi(Z,\calL)> -h^1(\calO_{Z_-})$, or
 $h^0(Z,\calL)> h^1(Z,\calL)-h^1(\calO_{Z_-})$. But from the epimorphism of sheaves
 $\calL \to \calL|_{Z_-}$ we  have
 \begin{equation}\label{eq:dag}
   h^1(Z,\calL)\geq h^1(\calO_{Z_-})\end{equation} hence $h^0(Z,\calL)> 0$.

 Next we  discuss  $H^0(Z,\calL)_{\reg}$. If $H^0(Z,\calL)_{\reg}\not=\emptyset$, then the chosen generic element  $\calL$
 of  $r_n^{-1}(\calO_{Z_-})$ is in the image of $c^{nl'}(Z)$, hence
 $\dim(\im (c^{nl'}(Z)))=\dim (r_n^{-1}(\calO_{Z_-}))=e_Z(I)$.

 Next, assume that  $H^0(Z,\calL)_{\reg}=\emptyset$. Then there exists $0<l\leq Z$ (the cycle of fixed components of
 $H^0(Z,\calL)$) such that
 \begin{equation}\label{eq:FIXED}
 H^0(Z-l, \calL(-l))=H^0(Z,\calL) \ \ \ \mbox{and} \ \ \ \  H^0(Z-l, \calL(-l))_{\reg}\not=\emptyset.
 \end{equation}
Consider the diagram

\begin{equation*}  
\begin{picture}(200,40)(130,0)
\put(50,37){\makebox(0,0)[l]{$
\ \ \eca^{nl'-l}(Z-l)\ \ \ \ \ \stackrel{c}{\longrightarrow} \ \ \ \pic^{nl'-l}(Z-l)$}}
\put(50,8){\makebox(0,0)[l]{$
\eca^{nl'-l}((Z-l)_-)\ \ \stackrel{c_-}{\longrightarrow} \  \pic^{nl'-l}((Z-l)_-)$}}
\put(76,22){\makebox(0,0){{\tiny $r_1$}$ \downarrow$}}
\put(192,22){\makebox(0,0){$\downarrow \, $\tiny{$r_2$}}}
\put(250,37) {\makebox(0,0)[l]{$ \ni \calL(-l)|_{Z-l}$}}
\put(250,8) {\makebox(0,0)[l]{$ \ni \calL(-l)|_{(Z-l)_-}=\calO_{(Z-l)_-}(-l)$}}
\end{picture}
\end{equation*}

\noindent
By a local computation on charts of the Cartier divisors we get that
$r_1$ is dominant (in fact, it is a submersion over any point of ${\rm im}(r_1)$),
hence the dimension of the generic fiber over
$(c_-^{-1})(\calO_{(Z-l)_-}(-l))$ is (cf. (\ref{eq:DIM}))
\begin{equation}\label{eq:NEW2}
\dim \eca^{nl'-l}(Z-l)-\eca^{nl'-l}((Z-l)_-)=
(nl'-l, Z-l)-(nl'-l,(Z-l)_-).
\end{equation}
On the other hand, by (\ref{eq:dimfiber}) we also have
\begin{equation}\label{eq:NEW3}
\dim \, (c_-^{-1})(\calO_{(Z-l)_-}(-l))= h^1((Z-l)_-,\calO_{(Z-l)_-}(-l))-h^1(\calO_{(Z-l)_-})+(nl'-l,(Z-l)_-).
\end{equation}

 Next we use the above commutative diagram: $c_-\circ r_1=r_2\circ c$.

 The affine map $r_2$ is associated with the linear projection $H^1(\calO_{Z-l})\to H^1(\calO_{(Z-l)_-})$, hence
\begin{equation}\label{eq:NEW1}
\dim\, r_2^{-1}(\calO_{(Z-l)_-}(-l)) = h^1(\calO_{Z-l})- h^1(\calO_{(Z-l)_-}).
\end{equation}

 Note also that $\calL\mapsto \calL(-l)|_{Z-l}$ corresponds to an affine projection,
  hence if $\calL$ is generic in  $\pic^{nl'}(Z)$ with $\calL|_{Z_-}=\calO_{Z_-}$,
 then $\calL(-l)_{Z-l}$ is generic in
 $ r_2^{-1}(\calO_{(Z-l)_-}(-l))$.
 But by  (\ref{eq:FIXED}) it is also an element of $\im (c^{nl'-l}(Z-l))$.
 Since we know from \cite{R} that $(c_-\circ r_1)^{-1}(\calO_{(Z-l)_-}(-l))$ is irreducible,
  we can compute its dimension two ways following the commutative diagram above.
It means that the dimension of the fiber $c^{-1}(\calL(-l)|_{Z-l})$ can be computed by combination of
 (\ref{eq:NEW2})--(\ref{eq:NEW3})--(\ref{eq:NEW1}), and it is
 \begin{equation}\label{eq:NEW4}
\dim \, c^{-1}(\calL(-l)|_{Z-l})= h^1((Z-l)_-,\calO_{(Z-l)_-}(-l))-h^1(\calO_{Z-l})+(nl'-l,Z-l)
\end{equation}
 Finally, this can be compared with (\ref{eq:dimfiber}) applied for $c=c^{nl'-l}(Z-l)$, which gives
 \begin{equation}\label{eq:NEW5}
\dim \, c^{-1}(\calL(-l)|_{Z-l})= h^1(Z-l,\calL(-l))-h^1(\calO_{Z-l})+(nl'-l,Z-l).
\end{equation}
 Therefore, (\ref{eq:NEW4}) and  (\ref{eq:NEW5}) show that
%
%
%
 \begin{equation}\label{eq:h^1}
  h^1(Z-l, \calL(-l))=h^1((Z-l)_-,\calO_{(Z-l)_-}(-l)).
 \end{equation}
 Now, $h^0(Z,\calL)\stackrel{(\ref{eq:FIXED})}{=} h^0(Z-l,\calL(-l))\stackrel{(\ref{eq:h^1})}{=}
 h^1(\calO_{(Z-l)_-}(-l))+ \chi (Z-l, \calL(-l)) \stackrel{(\ref{eq:CRIT2})}{<}
 h^1(\calO_{Z_-}) +\chi(Z,\calL)$. Hence $h^1(Z,\calL)<h^1(\calO_{Z_-})$, which is certainly false,
cf. (\ref{eq:dag}).

 This ends the proof of the Claim.

 \vspace{2mm}

 Therefore, in order to finish the proof of theorem,
 we have to prove that (\ref{eq:CRIT}) holds whenever $0<l\leq Z$ and  $ n \geq 1 - \min_{Z\geq l>0} \chi(l)$.
 With the notation $l=l_-+tE_v$ this reads as
 \begin{equation}\label{eq:CRIT3}
- h^1(\calO_{Z_-}) < \chi(l_-+t E_v) + t \cdot n - h^1(\calO_{(Z-l)_-}(-l_- - t E_v)).
\end{equation}
Now notice that  $H^0(\calO_{(Z-l)_-}(-l_- - t E_v)) \subset H^0(\calO_{Z_-}(- t E_v))$ and the inclusion is proper
 if $t = 0$ (hence  $l_->0$). Via  a computation this yields
\begin{equation}\label{eq:CRIT4}
\chi( t E_v) - h^1(\calO_{Z_-}(- t E_v)) \leq \chi(l_-+ t E_v) - h^1(\calO_{(Z-l)_-}(-l_- - t E_v)),
\end{equation}
such that  the inequality is strict if $t = 0$. This proves (\ref{eq:CRIT3}) for $t=0$. Moreover, in order to prove
(\ref{eq:CRIT3}) for $t\not =0$ it is enough to verify
\begin{equation}\label{eq:CRIT5}
- h^1(\calO_{Z_-}) < \chi( t E_v) + t \cdot n - h^1(\calO_{Z_-}( - t E_v))
\end{equation}
for any $0<t\leq -(Z,E^*_v)$ (and $n$ as in the assumption)
since (\ref{eq:CRIT4}) and (\ref{eq:CRIT5}) imply (\ref{eq:CRIT3}).

Let $A$
denote the fixed component cycle of $\calO_{Z_-}( - t E_v)$, i.e.
 $0 \leq A \leq Z_-$ is the unique cycle, such that $H^0(\calO_{Z_-}( - t E_v)) = H^0(\calO_{Z_--A}( - t E_v- A))$
and $H^0(\calO_{Z_--A}( - t E_v- A))_{\reg} \neq \emptyset$.

Then $h^1(\calO_{Z_-}( - t E_v)) = h^1(\calO_{Z_--A}( - t E_v- A)) + \chi(t E_v) - \chi(t E_v + A)$.
Moreover, since  the line bundle $\calO_{Z_--A}( - t E_v- A)$ has no
 fixed components,  by \cite[Th. 5.7.1]{NNI} we get
  $ h^1(\calO_{Z_--A}( - t E_v- A)) \leq h^1(\calO_{Z_--A})$. But $h^1(\calO_{Z_--A})\leq h^1(\calO_{Z_-})$ too.
  This means that:
\begin{equation}\label{eq:CRIT7}
h^1(\calO_{Z_-}( - t E_v)) \leq h^1(\calO_{Z_-}) + \chi(t E_v) - \chi(t E_v + A).
\end{equation}
This compared with (\ref{eq:CRIT5}) shows that  we have to prove that for $0<t\leq -(Z,E^*_v)$
%
%
\begin{equation*}
1-  \chi(t E_v + A)\leq   t \cdot \big(1 - \min_{Z\geq l>0} \chi(l)\big),
\end{equation*}
which clearly holds.
\end{proof}

\section{Singularities with `distinct pole property'}\label{s:DPP}

\subsection{}
Let $(X,o)$ be an arbitrary singularity  and let $\phi:\tX\to X$ be a fixed resolution. Fix also
$I\subset \calv$ and $Z\geq E$.

We say that $\phi$
satisfies the `distinct pole property' with respect to $I$
if for any $v\in I$ there exists

(i) a  basis $\{[\omega_1], \ldots, [\omega_h]\}$ of
$H^0(\tX, \Omega^2_{\tX}(Z))/H^0(\tX, \Omega^2_{\tX})$ with representatives $\{\omega_n\}_n$, and

(ii)  a partition $J\cup K$ of $\{1,2,\ldots, h\}$ ($J\cap K=\emptyset$)

\noindent
such that the forms $\{\omega_j\}_{j\in J}$ have no pole along $E_v$, while
the pole orders along $E_v$ of the forms $\{\omega_k\}_{k\in K}$ are  all non-trivial  and different.
(Note that this property is independent of the choice of the representatives  $\{\omega_n\}_n$ in
$H^0(\tX, \Omega^2_{\tX}(Z))$.)

Usually, in several examples, when this property holds,
the forms $\{\omega_j\}_{j}$ are determined by some geometric
property and can be chosen independently of the choice of $I$ and $v$.
(See e.g. the case of elliptic singularities \cite[3.4]{NNIII}.)

The main point in this definition is the following.
If one can find a basis with `distinct pole property' with respect to $\{v\}$, then for a generic
divisor $D$ intersecting $E_v$ transversally, the Leray residue  of
$\omega=\sum_n\lambda _n \omega_n $ ($\lambda_n\in\C$) along $D$
is regular if and only if $\omega$ has no pole  along $E_v$.

In particular, if we fix some $l'\in-\calS'$  with $I=I(l')$, and we can find a basis with  `distinct pole property'
with respect to $I$, then for any base point free line bundle $\calL\in\im (c^{l'}(Z))$ by Theorem
\ref{th:Formsres}, (\ref{eq:ezlc}) and (\ref{eq:ezl})
one has $h^1(Z,\calL)=\dim \,\Omega_Z(I)=h^1(\calO_Z)-e_Z(I)$.
That is, $\lim_{n\to \infty} h^1(Z,\calL^{ n})=h^1(Z,\calL)$.
In particular, by Lemma \ref{lem:COMP} $\dim (\im (c^{l'}(Z)))=e_Z(I)= \lim_{n\to \infty}(
\dim (\im (c^{nl'}(Z))))$.

This applied for $I=\calv$  reads as follows
(see also Corollary 2.2.12 and  Remark 5.1.2 of \cite{NNIII}).

\begin{proposition}\label{prop:dpp}
 Assume that $(X,o)$ is not rational and $Z\geq 0$. Fix a resolution $\phi$ and assume that
$\phi$  satisfies the distinct pole property with respect to $\calv$.
 Then the following properties hold:

(1) for any base point free $\calL\in \pic^{l'}(Z)$ we have $n_0(Z,\calL)=1$;

(2) for any Chern class $l'\in-\calS'\setminus \{0\}$ we have $n_0'(Z,l')=1$;
%
%
\end{proposition}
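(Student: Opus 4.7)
The proof is essentially a packaging of the two paragraphs of discussion immediately preceding the statement, applied to the specific case $I=\calv$ of the DPP. My plan is to separate this into a clean treatment of (1) and (2).

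For part (1), fix $l' \in -\calS' \setminus \{0\}$ with $I := I(l')$ and let $\calL \in \pic^{l'}(Z)$ be base point free. A generic global section of $\calL$ has divisor equal to the restriction of some $\widetilde{D}$ on $\tX$ meeting each $E_v$ ($v \in I$) transversally at a generic point. By Theorem \ref{th:Formsres}, $h^1(Z,\calL)$ equals the dimension of the subspace $W \subset H^0(\tX,\Omega^2_{\tX}(Z))/H^0(\tX,\Omega^2_{\tX})$ of classes whose Leray residues along every component of $\widetilde{D}$ are regular. The DPP applied at each $v \in I$ (valid since the DPP with respect to $\calv$ implies the DPP with respect to every subset) identifies regularity of the Leray residue at the generic point $\widetilde{D}\cap E_v$ with vanishing of the polar part along $E_v$. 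Intersecting these conditions over $v \in I$ gives $W = \Omega_Z(I)$, whose dimension by (\ref{eq:ezlc}) and (\ref{eq:ezl}) equals $h^1(\calO_Z) - e_Z(I)$. Since the general theory gives $\lim_n h^1(Z,\calL^n) = h^1(\calO_Z) - e_Z(I)$ and $n \mapsto h^1(Z,\calL^n)$ is non-increasing, the sequence must be constant from $n=1$ onward, so $n_0(Z,\calL) \leq 1$. The matching lower bound $n_0(Z,\calL)\geq 1$, i.e. $h^1(Z,\calL) < h^1(\calO_Z)$, holds whenever $e_Z(I) > 0$, which is forced by the non-rationality hypothesis together with $l' \neq 0$ and $Z \geq E$ via formula (\ref{eq:ezl}).

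For part (2), apply Lemma \ref{lem:COMP} to the base point free $\calL$ from part (1): it immediately yields $n_0'(Z,l') \leq n_0(Z,\calL) = 1$. The reverse inequality $n_0'(Z,l') \geq 1$ is automatic, since $\dim\im(c^0(Z)) = 0$ while $\lim_n \dim\im(c^{nl'}(Z)) = e_Z(I) > 0$, and the sequence is non-decreasing.

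The main obstacle I anticipate is a technical one regarding existence: Lemma \ref{lem:COMP} is invoked in part (2) via a bundle produced in part (1), and that bundle must be \emph{base point free}, not merely without fixed components. While $\im(c^{l'}(Z)) \neq \emptyset$ for $l' \in -\calS'\setminus\{0\}$ guarantees line bundles without fixed components, base point freeness is strictly stronger. I would handle this either by invoking the geometric context in which DPP is verified (where the natural line bundles in question are manifestly base point free, compare \cite[3.4]{NNIII}), or by extending the Leray-residue computation of part (1) to bundles with only finitely many base points, controlling the residue contributions at those points separately. A secondary concern is the strict positivity of $e_Z(I)$ needed for $n_0 \geq 1$; this I would verify using (\ref{eq:ezl}) and the assumption $Z \geq E$ together with non-rationality.
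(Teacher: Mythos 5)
Your argument is essentially the paper's own: the Proposition carries no separate proof there because it is exactly the specialization to $I=\calv$ of the discussion that precedes it, and your part (1) (generic transversal section, Theorem \ref{th:Formsres}, the identification $W=\Omega_Z(I)$ via the distinct pole property, then (\ref{eq:ezlc}) and (\ref{eq:ezl}), and monotonicity of $n\mapsto h^1(Z,\calL^{n})$) reproduces it faithfully. Two remarks. First, the obstacle you flag in part (2) dissolves if you choose the divisor rather than the bundle: take $D\in\eca^{-?}$ --- more precisely, a generic element of $\eca^{l'}(Z)$, i.e.\ a union of $-(l',E)$ transversal cuts at generic points of $\cup_{v\in I}E_v$ (such divisors form a dense subset of the irreducible variety $\eca^{l'}(Z)$) --- and set $\calL:=\calO_Z(D)$. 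This $\calL$ lies in $\im(c^{l'}(Z))$, hence has no fixed components, which is all Lemma \ref{lem:COMP} requires; Theorem \ref{th:Formsres} computes $h^1(Z,\calL)$ via residues along this particular $D$, so the distinct pole property gives $h^1(Z,\calL)=\dim\Omega_{Z}(I)=h^1(\calO_Z)-e_Z(I)$ and Lemma \ref{lem:COMP} applies with $n_0=1$. Base point freeness is only needed when the bundle is prescribed in advance, as in part (1). Second, your claim that $e_Z(I)>0$ is forced by non-rationality together with $l'\neq 0$ and $Z\geq E$ is not correct: by (\ref{eq:ezl}) one has $e_Z(I)=h^1(\calO_Z)-h^1(\calO_{Z|_{\calv\setminus I}})$, which vanishes whenever deleting $\{E_v\}_{v\in I}$ does not decrease $h^1$ (for instance $I=\{v\}$ with $E_v$ outside the support of the cohomological cycle), and in that case $n_0(Z,\calL)=n_0'(Z,l')=0$. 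What the argument honestly yields is $n_0(Z,\calL)\le 1$ and $n_0'(Z,l')\le 1$, with equality precisely when $e_Z(I)>0$ (which does hold for $I=\calv$ when $h^1(\calO_Z)>0$); this imprecision is already present in the statement itself, so it is not a defect introduced by your proof.
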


In the sequel let us assume that $Z\gg 0$.

In parallel to the definitions $\bar{r}(I)=1+n_0(\calO_{\tX}(-l))$ (where $I=H^0(\tX, \calO_{\tX}(-l))$,
$I\calO_{\tX}=\calO_{\tX}(-l)$ for a certain resolution $\tX$ and $l\in\calS'\cap L$)
and $\bar{r}(X,o)=\max \{\bar{r}(I)\,:\, I \ \mbox{as above}\}$ we can define the following objects as well.
For any resolution $\tX$ and base point free line bundle $\calL\in {\rm Pic}(\tX)$ set
$\bar{r}(\tX,\calL)=1+n_0(\calL)$ and $\bar{r}_{free}(X,o)=\max\{ \bar{r}(\tX,\calL)\,:\, \mbox{$\tX$ some resolution and} \
\calL\in \rm{Pic}(\tX)\ \mbox{is base point free}\}$.

From definitions $\bar{r}_{free}(X,o)\geq \bar{r}(X,o)$.

Recall that $\brr(X,o)=1$ if and only if $(X,o)$ is rational, cf.  \cite{OWY15b}.

Note that under the assumption of Proposition \ref{prop:dpp} (formulated for  a fixed resolution)
we cannot deduce automatically that $\bar{r}_{free}(X,o)=2$. The point is that even
if a certain resolution satisfies the
distinct pole property, from this fact does not follow that it is satisfies for any resolution.

A typical situation which might appear is the following.
Assume that $E_v\cap E_w\not=\emptyset$, the pole orders of $\omega_1$ (respectively of $\omega_2$)
along $E_v$ and $E_w$ are 2 and 1 (respectively  1 and 2). Then $\{\omega_1,\omega_2\}$  satisfies the distinct
pole property with respect $\{v,w\}$. On the other hand, if we blow up an intersection point of $E_v\cap E_w$,
then along the new exceptional divisor the pole orders are the same (2 and 2).

However, in the presence of some additional properties of the pole cycles we have the following.

\begin{lemma}\label{lem:GLOB}
Let $\tX_{min}$ be the minimal resolution of $(X,o)$. Assume that in this resolution
$\{\omega_n\}_{n=1}^{p_g}$ satisfies the distinct pole property
with respect to $\calv$. Let $P_n$ be the pole cycle of $\omega_n$.
If any of the following two properties hold

(i) $|P_n|\cap |P_m|=\emptyset$ for any $n\not=m$, or

(ii) $P_1\leq P_2\leq \cdots \leq P_{p_g}$,

\noindent
then the distinct pole property holds for any resolution $\tX$ (with respect to the set of vertices of that resolution).
\end{lemma}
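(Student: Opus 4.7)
The plan is to prove the lemma by induction on the number of point blow-ups in the factorization $\tX \to \tX_{min}$. It therefore suffices to consider a single blow-up $\pi\colon \tX' \to \tX$ at a point $p$ of the exceptional set, and to show that if the pair $(\tX, \{\omega_n\}_n)$ satisfies the distinct pole property with respect to the vertices of $\tX$ together with hypothesis (i) or (ii), then the pair $(\tX', \{\pi^*\omega_n\}_n)$ satisfies the same combination on $\tX'$.

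For a vertex $v'$ of $\tX'$ which is the strict transform of a vertex $v$ of $\tX$, the pole order of $\pi^*\omega_n$ along $v'$ coincides with the pole order of $\omega_n$ along $v$; hence a basis adapted to $v$ on $\tX$ pulls back to a basis adapted to $v'$ on $\tX'$ with the same partition $J \cup K$, and one only needs to verify the property at the new vertex $E_{new}$. A direct local computation in the blow-up charts shows that the pole order of $\pi^*\omega_n$ along $E_{new}$ is bounded above by $o_n(v)-1$ if $p$ lies on a unique exceptional divisor $E_v$ (Case A), and by $o_n(v)+o_n(w)-1$ if $p = E_v \cap E_w$ (Case B), with equality when the local leading coefficient of $\omega_n$ at $p$ does not vanish.

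Under hypothesis (i), the crucial observation is that $|P_n|\cap|P_m|=\emptyset$ rules out the Case B configuration where $v \in |P_n|$ and $w \in |P_m|$ with $n \ne m$, since the intersection point $E_v \cap E_w$ would then lie in $|P_n|\cap|P_m|$. Combined with the analogous remark in Case A (at most one $\omega_n$ has pole along $E_v$), this forces at most one $\pi^*\omega_n$ to have a pole along $E_{new}$, so the distinct pole property there is vacuous. Moreover, strict transforms of pairwise disjoint divisors remain disjoint and $E_{new}$ enters the pullback pole support of at most one form, so hypothesis (i) persists on $\tX'$, closing the induction.

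Under hypothesis (ii), monotonicity pulls back as $\pi^*P_1 \le \cdots \le \pi^*P_{p_g}$, and subtracting the common term $E_{new}$ preserves the ordering, so hypothesis (ii) persists. For the distinct pole property at $E_{new}$, I will argue that the positive values among $o_n(v)+o_n(w)-1$ (resp.\ $o_n(v)-1$) are strictly increasing in $n$: an equality $o_n(v)+o_n(w) = o_m(v)+o_m(w)$ for $n < m$ would, by componentwise monotonicity, force $o_n(v)=o_m(v)$ and $o_n(w)=o_m(w)$, and the distinct pole property on $\tX$ then requires both equalities to hold at value $0$, contradicting positivity of the sum. The main obstacle I anticipate is that local leading coefficients of $\omega_n$ at $p$ may vanish, so that the actual pole order along $E_{new}$ drops below the formula; this is harmless under (i) (at most one form has a pole), while under (ii) one handles the collision by replacing the offending basis elements with suitable linear combinations of lower-indexed forms, using the flexibility that the distinct pole property permits a different basis at each vertex and the strictly nested flag structure on $\tX_{min}$ inherited from the ordered $P_n$.
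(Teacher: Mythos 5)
Your overall strategy --- induction on the number of blow-ups in $\tX\to\tX_{min}$, reducing to a single blow-up $\pi$ at a point $p$ and checking the distinct pole property only at the new vertex $E_{new}$ --- is exactly the route the paper takes (its proof consists of precisely this one sentence), and your treatment of hypothesis (i) is complete and correct: disjointness of the $|P_n|$ forces at most one form to have a pole along a curve through $p$, hence at most one pullback acquires a pole along $E_{new}$, and (i) visibly persists after the blow-up.

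The gap is in case (ii), and it sits exactly where you flag it. Your argument that the \emph{expected} pole orders $o_n(v)+o_n(w)-1$ are pairwise distinct among the positive ones is correct, but the actual pole order of $\pi^*\omega_n$ along $E_{new}$ is $o_n(v)+o_n(w)-1-\mathrm{mult}_p(g_n)$, where $g_n$ is the local numerator of $\omega_n$ at $p$, and when some $\mathrm{mult}_p(g_n)>0$ two pullbacks can collide at the same positive order $k$. At that point the proposed repair --- subtracting suitable lower-indexed forms --- need not work: the order-$k$ leading coefficients of the colliding forms are functions on $E_{new}\simeq\mathbb{P}^1$ (for instance a constant for one form and a non-constant linear function $\beta+\gamma y_1$ for a form whose numerator vanishes to order one at $p$), and if these are linearly independent then no linear combination lowers the pole order of either form below $k$; having distinct nonzero pole orders along $E_{new}$ is a property of the graded dimensions of the filtration of $\pi^*V$ by pole order, not something achievable by a change of basis alone. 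The same phenomenon undermines your claim that (ii) persists: if the order of $\pi^*\omega_m$ along $E_{new}$ drops below that of $\pi^*\omega_n$ for some $n<m$, the new pole cycles are no longer totally ordered, so the inductive hypothesis is lost at the next blow-up. To close the argument one must either show that under (ii) such drops cannot produce collisions or inversions (e.g. by controlling $\mathrm{mult}_p(g_n)$ via the flag structure of the $P_n$), or strengthen the statement being carried through the induction; as written, case (ii) is not proved.
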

\begin{proof}
Use induction with respect to the number of blow ups needed to obtain   $\tX\to \tX_{min}$.
\end{proof}
In particular,  Proposition \ref{prop:dpp} and Lemma \ref{lem:GLOB}  implies the following.

\begin{corollary}\label{cor:dpp}
Assume that $(X,o)$ is not rational.
 If any resolution $\phi$
satisfies the distinct pole property with respect to $\calv$ then  $\brr_{free}(X,o)=2$. In particular
$\brr(X,o)=2$ too.

This statement applies whenever the assumptions of Lemma \ref{lem:GLOB} hold.
\end{corollary}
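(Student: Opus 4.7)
The plan is to observe that this corollary is essentially a bookkeeping combination of \ref{prop:dpp}, \ref{lem:GLOB}, the definition of $\brr_{free}$, and the known characterization of rationality via $\brr$.

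First I would fix an arbitrary resolution $\tX\to X$ and an arbitrary base point free line bundle $\calL\in\pic(\tX)$; my goal will be to show $n_0(\calL)=1$. By the blanket assumption of the corollary, this resolution $\tX$ satisfies the distinct pole property with respect to its vertex set $\calv$. Choose any cycle $Z\gg 0$ on $\tX$; by the Formal Function Theorem $h^1(Z,\calL^n)=h^1(\tX,\calL^n)$ for all $n\geq 0$, so stability at level $Z$ is equivalent to stability at level $\tX$. Now \ref{prop:dpp}(1) applies directly: the distinct pole property with respect to $\calv$ forces $n_0(Z,\calL|_Z)=1$, and therefore $n_0(\calL)=1$, i.e.\ $\bar r(\tX,\calL)=1+n_0(\calL)=2$.

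Next I would take the maximum over all resolutions $\tX$ and all base point free bundles $\calL$, which yields $\brr_{free}(X,o)\leq 2$. For the lower bound I would invoke the hypothesis that $(X,o)$ is non--rational, combined with the theorem of Okuma--Watanabe--Yoshida \cite{OWY15b} recalled in the introduction, which gives $\brr(X,o)\geq 2$. Using the elementary comparison $\brr_{free}(X,o)\geq \brr(X,o)$ noted just before the corollary, equality $\brr_{free}(X,o)=2$ follows, and consequently $\brr(X,o)=2$ as well.

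For the final sentence I would simply recall that whenever either hypothesis (i) or (ii) of \ref{lem:GLOB} is satisfied on the minimal resolution $\tX_{min}$, that lemma propagates the distinct pole property to every resolution of $(X,o)$, so the main hypothesis of the corollary is fulfilled and the previous conclusion applies verbatim.

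Since each individual step is quoted directly from an earlier result in the paper, there is no real technical obstacle; the only point that must be handled with a brief explicit remark is the passage from the $Z$--level statement of \ref{prop:dpp} to the $\tX$--level invariants $n_0(\calL)$ and $\bar r(\tX,\calL)$, which is the reason for the explicit $Z\gg 0$ hypothesis inserted in the excerpt immediately before the definition of $\bar r_{free}$.
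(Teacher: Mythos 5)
Your proposal is correct and is exactly the argument the paper intends: the corollary is stated as an immediate consequence of Proposition \ref{prop:dpp}, Lemma \ref{lem:GLOB}, the inequality $\brr_{free}(X,o)\geq \brr(X,o)$, and the characterization of rationality by $\brr(X,o)=1$ from \cite{OWY15b}, all of which you combine in the same way (the paper gives no separate proof beyond citing these ingredients). Your explicit remark on passing from the $Z$-level statement of Proposition \ref{prop:dpp} to $n_0(\calL)$ via $Z\gg 0$ and the Formal Function Theorem is a welcome clarification of a step the paper leaves implicit.
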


\begin{example}
Assume that $(X,o)$ is an elliptic singularity.
\marginpar{ do we need $QHS^3$ link}
In this case $p_g$ might depend on the analytic structure supported on the
elliptic topological type.
However, for the minimal resolution the distinct pole property with respect to $\calv$ together  with property
 (ii) from Lemma \ref{lem:GLOB} are  satisfied
(see e.g. \cite[3.4]{NNIII}).
 In particular  $\brr_{free}(X,o)=\brr(X,o)=2$. This provides a new proof of the identity
 $ \brr(X,o)=2$,  valid  for elliptic germs,  proved originally in \cite{O17}.
\end{example}

\begin{example}
Consider the following minimal good resolution graph:

\begin{picture}(200,70)(-50,0)
\put(100,55){\circle*{4}}

\put(20,40){\circle*{4}}\put(50,39){\circle*{4}}\put(150,39){\circle*{4}}\put(180,40){\circle*{4}}
\put(20,25){\circle*{4}}\put(50,25){\circle*{4}}\put(150,25){\circle*{4}}\put(180,25){\circle*{4}}

\put(10,10){\circle*{4}}\put(30,10){\circle*{4}}\put(40,10){\circle*{4}}\put(60,10){\circle*{4}}
\put(190,10){\circle*{4}}\put(170,10){\circle*{4}}\put(160,10){\circle*{4}}\put(140,10){\circle*{4}}

\put(100,55){\line(-3,-1){50}}\put(100,55){\line(-5,-1){80}}
\put(100,55){\line(3,-1){50}}\put(100,55){\line(5,-1){80}}
\put(20,40){\line(0,-1){15}}\put(50,40){\line(0,-1){15}}\put(150,40){\line(0,-1){15}}\put(180,40){\line(0,-1){15}}
\put(20,25){\line(-2,-3){10}}\put(50,25){\line(-2,-3){10}}\put(150,25){\line(-2,-3){10}}\put(180,25){\line(-2,-3){10}}
\put(20,25){\line(2,-3){10}}\put(50,25){\line(2,-3){10}}\put(150,25){\line(2,-3){10}}\put(180,25){\line(2,-3){10}}
\put(100,40){\makebox(0,0){$\ldots$}}\put(100,20){\makebox(0,0){$\ldots$}}
\put(110,60){\makebox(0,0){\small{$E_0$}}}
\put(10,25){\makebox(0,0){\small{$-1$}}}
\put(10,25){\makebox(0,0){\small{$-1$}}}
\put(40,25){\makebox(0,0){\small{$-1$}}}
\put(170,25){\makebox(0,0){\small{$-1$}}}
\put(140,25){\makebox(0,0){\small{$-1$}}}
\end{picture}

\noindent where $E_0$ has $n\geq 2$ adjacent edges, all $g_v=0$, and all the unmarked vertices have self-intersection number
$-N$, where $N$ is very large with respect to $n$. Let us denote the $(-1)$--vertices by $v_1, \ldots  ,v_n$.
Set $F=\sum _{i=1}^n E_{v_i}$.
 A computation shows that $ \lfloor Z_K\rfloor=E+F$. Note also that the Artin minimal cycle is
$Z_{min}=E+2F$, hence $\lfloor Z_K\rfloor\leq Z_{min}$.
Let us fix an arbitrary analytic structure $(X,o)$
supported by the topological type given by this graph, and a resolution $\tX$ with the above dual graph.
 Then $h^1(\calO_{Z_{min}})$ can be computed by Laufer
algorithm \cite{Laufer72}, and it turns out that $p_g=h^1(\calO_{E+F})=h^1(\calO_{Z_{min}})=n$ and the
cohomological cycle $Z_{coh}$  is $E+F-E_0$.

 For any $v_i$  consider the
 minimal star--shaped subgraph whose node is this vertex. Then it determines
  a minimally elliptic graph $\Gamma_i$ and singularity with $p_g=1$ and it admits a unique
 differential form with nontrivial pole. Using this we obtain that there exists a collection of forms
 $\omega_1,\ldots , \omega _n$ on $\tX$ such that the pole cycle of $\omega_i$ is non--trivial and it
 is supported on $\Gamma_i$. In particular,
 they satisfy the distinct pole property together with the additional property (i) of Lemma \ref{lem:GLOB}.
 Therefore  $\brr_{free}(X,o)=\brr(X,o)=2$ (for any analytic structure supported on the above $\Gamma$).

 On the other hand, since $n\geq 2$, the graph is not elliptic: $\chi(Z_{min})=1-n<0$.

This answers negatively  \cite[Problem 3.12]{Okuma19} of the third author (which asked
whether the elliptic singularities are characterized by  the property $\brr(X,o)=2$).
In fact, we proved  that there exists a singularity with $\brr(X,o)=2$ but with arbitrary small $\min\chi$
(or with arbitrary high $h^1(\calO_{Z_{min}})$).
\end{example}

\begin{example}\label{ex:dpp}
One can find non--elliptic singularities with $\brr_{free}(X,o)=2$ even among the Gorenstein germs.
Consider the following resolution graph (cf. \cite[Example 5.1.3]{NNIII}).

\begin{picture}(200,50)(-20,0)
\put(230,40){\makebox(0,0){\small{$-2$}}}
\put(90,40){\makebox(0,0){\small{$-2$}}}
\put(110,40){\makebox(0,0){\small{$-1$}}}
\put(170,30){\circle*{4}}\put(190,30){\circle*{4}}\put(210,30){\circle*{4}}\put(230,30){\circle*{4}}
\put(150,30){\circle*{4}}
\put(210,10){\circle*{4}}
\put(130,40){\makebox(0,0){\small{$-7$}}}
\put(150,40){\makebox(0,0){\small{$-3$}}}
\put(170,40){\makebox(0,0){\small{$-3$}}}\put(190,40){\makebox(0,0){\small{$-7$}}}
\put(210,40){\makebox(0,0){\small{$-1$}}}
\put(200,10){\makebox(0,0){\small{$-3$}}}\put(120,10){\makebox(0,0){\small{$-3$}}}
\put(90,20){\makebox(0,0){\small{$E_1$}}}\put(230,20){\makebox(0,0){\small{$E_2$}}}
\put(90,30){\circle*{4}}
\put(110,30){\circle*{4}}
\put(130,30){\circle*{4}}
\put(110,10){\circle*{4}}
\put(90,30){\line(1,0){140}}\put(110,10){\line(0,1){20}}\put(210,10){\line(0,1){20}}
\end{picture}

 The graph is {\it not} elliptic, $\min\chi=-1$.

It is realized e.g. by the hypersurface singularity with non--degenerate Newton boundary
$\{z^3+x^{13}+y^{13}+x^2y^2=0\}$. This analytic structure has $p_g=5$ and
it is clearly Gorenstein. Let
$\omega $ be the Gorenstein form (with pole $Z_K$). Then the classes of the five forms
$\omega, \, \omega x, \, \omega x^2, \,  \omega y, \, \omega y^2$ constitute a basis of
$H^0(\Omega^2_{\tX}(Z))/H^0(\Omega^2_{\tX})$, and they satisfy the
`distinct  pole property' for $Z\gg 0$
(the verification is left to the reader; the divisor of $x$ is $E_1^*$, while the divisor of $y$ is $E^*_2$).
A verification shows that the distinct pole property survives even if we blow up (several  times)
this  $\tX$.

This example shows that Lemma \ref{lem:GLOB} can be generalized to a more general situation regarding
 the structure of the poles
(a combination of properties (i) and (ii)). (These conditions can be compared with the GCD property from \cite{OkMult}.)
\end{example}

\end{document}